\PassOptionsToPackage{unicode}{hyperref}
\PassOptionsToPackage{hyphens}{url}
\PassOptionsToPackage{dvipsnames,svgnames,x11names}{xcolor}
\documentclass[
  letterpaper,
  10pt,
  conference]{ieeeconf}

\usepackage{amsmath,amssymb}
\usepackage{iftex}
\ifPDFTeX
  \usepackage[T1]{fontenc}
  \usepackage[utf8]{inputenc}
  \usepackage{textcomp} 
\else 
  \usepackage{unicode-math}
  \defaultfontfeatures{Scale=MatchLowercase}
  \defaultfontfeatures[\rmfamily]{Ligatures=TeX,Scale=1}
\fi
\usepackage[]{times}
\ifPDFTeX\else  
\fi
\IfFileExists{upquote.sty}{\usepackage{upquote}}{}
\IfFileExists{microtype.sty}{
  \usepackage[]{microtype}
  \UseMicrotypeSet[protrusion]{basicmath} 
}{}
\usepackage{xcolor}
\makeatletter
\ifx\paragraph\undefined\else
  \let\oldparagraph\paragraph
  \renewcommand{\paragraph}{
    \@ifstar
      \xxxParagraphStar
      \xxxParagraphNoStar
  }
  \newcommand{\xxxParagraphStar}[1]{\oldparagraph*{#1}\mbox{}}
  \newcommand{\xxxParagraphNoStar}[1]{\oldparagraph{#1}\mbox{}}
\fi
\ifx\subparagraph\undefined\else
  \let\oldsubparagraph\subparagraph
  \renewcommand{\subparagraph}{
    \@ifstar
      \xxxSubParagraphStar
      \xxxSubParagraphNoStar
  }
  \newcommand{\xxxSubParagraphStar}[1]{\oldsubparagraph*{#1}\mbox{}}
  \newcommand{\xxxSubParagraphNoStar}[1]{\oldsubparagraph{#1}\mbox{}}
\fi
\makeatother
\providecommand{\tightlist}{%
  \setlength{\itemsep}{0pt}\setlength{\parskip}{0pt}}\usepackage{longtable,booktabs,array}
\usepackage{calc} 
\usepackage{etoolbox}
\makeatletter
\patchcmd\longtable{\par}{\if@noskipsec\mbox{}\fi\par}{}{}
\makeatother
\IfFileExists{footnotehyper.sty}{\usepackage{footnotehyper}}{\usepackage{footnote}}
\makesavenoteenv{longtable}
\usepackage{graphicx}
\makeatletter
\def\maxwidth{\ifdim\Gin@nat@width>\linewidth\linewidth\else\Gin@nat@width\fi}
\def\maxheight{\ifdim\Gin@nat@height>\textheight\textheight\else\Gin@nat@height\fi}
\makeatother
\setkeys{Gin}{width=\maxwidth,height=\maxheight,keepaspectratio}
\makeatletter
\def\fps@figure{htbp}
\makeatother
\NewDocumentCommand\citeproctext{}{}

\makeatletter
 \let\@cite@ofmt\@firstofone
 \def\@biblabel#1{}
 \def\@cite#1#2{{#1\if@tempswa , #2\fi}}
\makeatother
\newlength{\cslhangindent}
\newlength{\csllabelwidth}
\newenvironment{CSLReferences}[2] 
 {\begin{list}{}{%
  \setlength{\itemindent}{0pt}
  \setlength{\leftmargin}{0pt}
  \setlength{\parsep}{0pt}
  \ifodd #1
   \setlength{\leftmargin}{\cslhangindent}
   \setlength{\itemindent}{-1\cslhangindent}
  \fi
  \setlength{\itemsep}{#2\baselineskip}}}
 {\end{list}}
\usepackage{calc}

\newcommand{\CSLLeftMargin}[1]{\parbox[t]{\csllabelwidth}{\strut#1\strut}}
\newcommand{\CSLRightInline}[1]{\parbox[t]{\linewidth - \csllabelwidth}{\strut#1\strut}}

\IEEEoverridecommandlockouts
\usepackage{booktabs}
\usepackage{amsmath,amssymb,amsfonts}
\usepackage{xcolor}

\makeatletter
\AtBeginDocument{%
  \@ifundefined{c@theorem}{}{\counterwithout{theorem}{section}}%
  \@ifundefined{c@lemma}{}{\counterwithout{lemma}{section}}%
  \@ifundefined{c@corollary}{}{\counterwithout{corollary}{section}}%
  \@ifundefined{c@proposition}{}{\counterwithout{proposition}{section}}%
  \@ifundefined{c@conjecture}{}{\counterwithout{conjecture}{section}}%
  \@ifundefined{c@definition}{}{\counterwithout{definition}{section}}%
  \@ifundefined{c@example}{}{\counterwithout{example}{section}}%
  \@ifundefined{c@exercise}{}{\counterwithout{exercise}{section}}%
  \@ifundefined{c@refremark}{}{\counterwithout{refremark}{section}}%
  \@ifundefined{c@refsolution}{}{\counterwithout{refsolution}{section}}%
}
\makeatother
\ifx\oldparagraph\undefined\else\let\paragraph\oldparagraph\fi
\makeatletter
\renewcommand{\@IEEEsectpunct}{.\ \,}
\makeatother
\usepackage{caption}
\usepackage{etoolbox}
\AtBeginEnvironment{CSLReferences}{%
  \footnotesize
  \settowidth{\csllabelwidth}{[99]}%
  \addtolength{\csllabelwidth}{\labelsep}}
\usepackage{booktabs}
\usepackage{longtable}
\usepackage{array}
\usepackage{multirow}
\usepackage{wrapfig}
\usepackage{float}
\usepackage{colortbl}
\usepackage{pdflscape}
\usepackage{tabu}
\usepackage{threeparttable}
\usepackage{threeparttablex}
\usepackage[normalem]{ulem}
\usepackage{makecell}
\usepackage{xcolor}
\makeatletter
\@ifpackageloaded{caption}{}{\usepackage{caption}}
\AtBeginDocument{%
\ifdefined\contentsname
  \renewcommand*\contentsname{Table of contents}
\else
  \newcommand\contentsname{Table of contents}
\fi
\ifdefined\listfigurename
  \renewcommand*\listfigurename{List of Figures}
\else
  \newcommand\listfigurename{List of Figures}
\fi
\ifdefined\listtablename
  \renewcommand*\listtablename{List of Tables}
\else
  \newcommand\listtablename{List of Tables}
\fi
\ifdefined\figurename
  \renewcommand*\figurename{Figure}
\else
  \newcommand\figurename{Figure}
\fi
\ifdefined\tablename
  \renewcommand*\tablename{Table}
\else
  \newcommand\tablename{Table}
\fi
}
\@ifpackageloaded{float}{}{\usepackage{float}}
\floatstyle{ruled}
\@ifundefined{c@chapter}{\newfloat{codelisting}{h}{lop}}{\newfloat{codelisting}{h}{lop}[chapter]}
\floatname{codelisting}{Listing}

\usepackage{amsthm}
\theoremstyle{plain}
\newtheorem{lemma}{Lemma}[section]
\theoremstyle{definition}
\newtheorem{exercise}{Assumption}[section]
\theoremstyle{plain}
\newtheorem{theorem}{Theorem}[section]
\theoremstyle{definition}
\newtheorem{definition}{Definition}[section]
\theoremstyle{plain}
\newtheorem{proposition}{Proposition}[section]
\theoremstyle{plain}
\newtheorem{corollary}{Corollary}[section]
\theoremstyle{remark}
\AtBeginDocument{}

\makeatother
\makeatletter
\@ifpackageloaded{caption}{}{\usepackage{caption}}
\@ifpackageloaded{subcaption}{}{\usepackage{subcaption}}
\makeatother

\ifLuaTeX
  \usepackage{selnolig}  
\fi
\usepackage{bookmark}

\IfFileExists{xurl.sty}{\usepackage{xurl}}{} 
\hypersetup{
  pdftitle={High-Dimensional Regularization of the Spatial Sign Covariance Matrix for Robust Shape Estimation},
  pdfauthor={Jonas Elmerraji\^{}\{1\}, James Spall\^{}\{2\}, and Mateo Díaz\^{}\{3\}},
  colorlinks=true,
  linkcolor={blue},
  filecolor={Maroon},
  citecolor={Blue},
  urlcolor={Blue},
  pdfcreator={LaTeX via pandoc}}

\title{\LARGE\bfseries High-Dimensional Regularization of the Spatial
Sign Covariance Matrix for Robust Shape Estimation}
\author{Jonas Elmerraji\(^{1}\), James Spall\(^{2}\), and Mateo
Díaz\(^{3}\)\thanks{$^{1}$Jonas Elmerraji is with the Whiting School of Engineering, Johns Hopkins University, Baltimore, MD, USA {\tt\small jonaselm@jhu.edu}}\thanks{$^{2}$James Spall is with the Department of Applied Mathematics and Statistics, Johns Hopkins University, Baltimore, MD, USA {\tt\small james.spall@jhuapl.edu}}\thanks{$^{3}$Mateo Díaz is with the Department of Statistics \& Data Science Institute, University of Chicago, Chicago, IL, USA {\tt\small mateodd@uchicago.edu}}}
\date{}

\begin{document}
\maketitle
\begin{abstract}
Covariance estimation is a key component of many applications in system
identification and data-driven control. Although heavy-tailed
distributions may lack a covariance matrix to estimate, the shape matrix
provides a well-defined, scale-free generalization for the broad family
of elliptical distributions. In this setting, practitioners often employ
Tyler's M-estimator (TME), which is defined implicitly and is computed
using a fixed-point iteration. The spatial sign covariance matrix
estimator (SSCM) offers a much simpler alternative: it corresponds to
one iteration of TME. Yet, the SSCM has been largely regarded as an
inferior estimator due to its statistical inconsistency under
fixed-dimensional asymptotics. By contrast, using second-order tools
from random matrix theory, we establish that, under standard
assumptions, SSCM asymptotically dominates TME in Frobenius risk when
dimension and sample size grow proportionally. This advantage arises
from implicit regularization, which reduces variance at the cost of a
bias that vanishes as the dimension grows. Numerical experiments support
the theoretical predictions even at moderate dimensions.
\end{abstract}

\section{INTRODUCTION}\label{introduction}

Covariance matrices characterize the linear dependence structure of
random vectors, which, naturally, makes their estimation central to
system identification, data-driven control, signal processing, and
machine learning {[}1{]}, {[}2{]}. In many practical settings, however,
observations are affected by gross outliers and heavy-tailed noise,
which greatly complicate estimation and have motivated robust extensions
of Kalman filtering {[}3{]}, methods for training neural networks
{[}4{]}, and data sensitivity measures {[}5{]}. With sufficiently heavy
tails, the covariance matrix itself may not even exist, leaving sample
moments without a corresponding finite population covariance matrix to
estimate.

To study dependence in a single framework spanning light- and
heavy-tailed noise, we model observations as randomly scaled Gaussian
vectors. This representation encompasses many common noise models used
in control and state estimation, including the Gaussian itself, as well
as Student's \(t\), elliptically symmetric \(\alpha\)-stable, and
contaminated Gaussian noise models. Formally, we consider observations
of the form

\begin{equation}\phantomsection\label{eq-model}{
\begin{aligned}
\mathbf X_i &= \pmb \mu +\sqrt{W_i}\mathbf G_i, & i = 1, \dots, n,
\end{aligned} 
}\end{equation} where each observation \(\mathbf X_i \in \mathbb R^p\)
is a column vector whose corresponding \(\mathbf G_i\) is i.i.d.
\(N_p(\mathbf 0, \pmb \Sigma)\), and the mixing variable \(W_i>0\) is a
scalar random variable. Our goal is to estimate \(\pmb\Sigma\), the
covariance matrix of the latent Gaussian vector \(\mathbf G_i\), which
provides a well-defined estimation target even when the observations
\(\mathbf X_i\) have infinite variance. We make some blanket assumptions
following {[}6{]}: we suppose that the location vector
\(\pmb \mu \in \mathbb R^p\) is known, and without loss of generality
set it to \(\pmb \mu = \mathbf 0\). Further, we assume \(\pmb\Sigma\) is
positive definite and that \(\text{tr}(\pmb\Sigma) = p\) for
identifiability. Let
\(\mathbf X = [\mathbf X_1 \cdots \mathbf X_n]^\top\) and
\(\mathbf G = [\mathbf G_1 \cdots \mathbf G_n]^\top\) denote the
\(n \times p\) matrices of stacked observations and latent Gaussian
vectors, respectively.

Equation (\ref{eq-model}) includes the elliptical family of
distributions, the standard generalization of the Gaussian to
heavy-tailed noise commonly used in signal processing and state
estimation {[}7{]}. Under the previously mentioned trace-\(p\)
restriction, \(\pmb\Sigma\) is often called a \emph{shape matrix}, as it
defines the scale-free family of ellipsoids that form the level surfaces
of the latent Gaussian density. Even when the population covariance
matrix does not exist for the observations, if \(\mathbf X_i\) is itself
elliptical and its density exists, the shape matrix likewise defines a
representative of its ellipsoidal level surfaces where the p.d.f.
remains constant. One such example of the infinite-variance case is the
elliptical \(\alpha\)-stable distribution with \(\alpha < 2\).

While \(\mathbf G_i\) are themselves assumed to be i.i.d., no
independence need be assumed for the mixing variables. Because of this,
when the mixing variables are allowed to depend on the Gaussian vectors
or on one another, (\ref{eq-model}) further encompasses substantially
non-elliptical distributions. In such cases, \(\pmb\Sigma\) describes
the dependence imposed by the latent Gaussian vectors, but not any
additional dependence introduced through the mixing variables. The same
separation applies to scale: the trace restriction on \(\pmb\Sigma\)
removes absolute scale, which is instead carried by the mixing
variables, but the overall scale of \(\mathbf X_i\) can be estimated
separately or supplied externally, for instance from available
conditioning information.

When the law of \(\mathbf X_i\) is fully specified, maximum likelihood
estimators (MLEs) are a natural choice for estimating \(\pmb\Sigma\), as
they are asymptotically consistent and efficient under standard
regularity conditions. When the specific distribution of the mixing
variable is not known, maximum likelihood can still be used to estimate
the shape matrix; because \(W_i>0\), normalizing each observation to
unit length removes its random scale, leaving directions distributed
according to an angular central Gaussian distribution that depends only
on \(\pmb\Sigma\). One appealing aspect of this approach is that it is
robust to model misspecification within the class of models allowed by
(\ref{eq-model}). When it exists, this maximum likelihood estimator for
the directions of the normalized vectors is called Tyler's M-estimator
(TME), which is defined as the matrix \(\hat{\pmb\Sigma}_{\text{TME}}\)
that satisfies the fixed point equation

\begin{equation}\phantomsection\label{eq-tme}{
\begin{aligned}
\hat{\pmb\Sigma}_{\text{TME}} &= \frac{p}{n}\sum_{i=1}^n \frac{\mathbf X_i \mathbf X_i^\top}{\mathbf X_i^\top \hat{\pmb\Sigma}_{\text{TME}}^{-1}\mathbf X_i}, & \text{tr}(\hat{\pmb\Sigma}_{\text{TME}}) = p.
\end{aligned}
}\end{equation}

Among M-estimators for shape on elliptical data, TME minimizes the
worst-case asymptotic variance with dimension fixed {[}8{]}, {[}9{]},
and is considered by many to be the standard choice for estimating
shape. Equation (\ref{eq-tme}) is typically solved iteratively, using
the inverse of the current estimate to compute the next step. This
procedure becomes expensive in high dimensions, however, where both the
inversion and the number of iterations contribute substantially to the
cost; even approximating TME cheaply remains an active area of research
{[}10{]}.

A much simpler shape estimator is the spatial sign covariance matrix
(SSCM), a robust estimator introduced by {[}11{]} and defined as

\begin{equation}\phantomsection\label{eq-sscm}{
\hat{\pmb\Sigma}_{\text{SSCM}}=\frac{p}{n}\sum_{i=1}^n\frac{\mathbf X_i \mathbf X_i^\top}{\|\mathbf X_i\|^2} = \frac{p}{n}\sum_{i=1}^n\frac{\mathbf X_i \mathbf X_i^\top}{\mathbf X_i^\top\mathbf X_i},
}\end{equation} where \(\|\cdot\|\) is the Euclidean norm. The SSCM can
be seen as either the sample covariance matrix of Euclidean-normalized
observation vectors or as a one-shot version of TME that uses the
identity as its initialization. Note that the SSCM is typically trace 1
by construction; in (\ref{eq-sscm}) we rescale to trace \(p\) for ease
of comparison. Both TME and the SSCM are distribution-free for our
model, in the sense that the mixing variables, including their
dependence, cancel exactly in the fractions of (\ref{eq-tme}) and
(\ref{eq-sscm}). Likewise, observe that, because
\(\mathbf X_i/\|\mathbf X_i\| = \mathbf G_i/\|\mathbf G_i\|\) when
\(\pmb\mu = \mathbf 0\), it suffices to consider Gaussian observations,
which simplify calculations. While TME is the standard choice for robust
shape estimation, the SSCM has effectively been left out of the modern
estimation toolbox, in large part because it is statistically
inconsistent under classical fixed-dimension asymptotics {[}12{]}.
Whether that inconsistency is a defect, however, depends on the
relationship between dimension and sample size.

\paragraph*{Main contribution}\label{main-contribution}
\addcontentsline{toc}{paragraph}{Main contribution}

We show that the SSCM asymptotically outperforms benchmark approaches in
mean-squared error when the dimension \(p\) and number of observations
\(n\) grow to infinity together, and can outperform them at practically
relevant finite dimensions. Specifically, the SSCM's lack of consistency
provides beneficial regularization: it trades practically significant
reductions in variance for a bias cost that vanishes as the dimension
grows.

\section{PRELIMINARIES AND
ASSUMPTIONS}\label{preliminaries-and-assumptions}

Using \(\|\cdot\|_F\) do denote the Frobenius norm, we quantify the
bias--variance tradeoff in shape estimation using Frobenius risk, the
expected sum of squared errors across the entries of a shape estimate:

\[
\text{Risk}(\hat{\pmb\Sigma},\pmb\Sigma ) = \mathbb E\|\hat{\pmb\Sigma} - \pmb\Sigma\|_F^2.
\] Where it aids in risk comparison across problems of varying (but
finite) dimensions, we normalize this risk by \(p\).

\begin{definition}[Asymptotic
Dominance]\protect\hypertarget{def-asymptotic-dominance}{}\label{def-asymptotic-dominance}

Along a sequence of models, we say that \(\hat{\pmb\Sigma}_A\)
\emph{asymptotically dominates} \(\hat{\pmb\Sigma}_B\) if \[
\liminf_{n,p\to\infty}
\left[
\operatorname{Risk}(\hat{\pmb\Sigma}_B,\pmb\Sigma)
-
\operatorname{Risk}(\hat{\pmb\Sigma}_A,\pmb\Sigma)
\right] > 0.
\]

\end{definition}

This comparison retains risk differences that persist asymptotically,
even when common leading-order growth makes the risk ratio tend to one.
These effects can contribute to material differences in approximate risk
at finite dimensions. To study this high-dimensional regime where sample
size and dimension grow together, we introduce the following assumptions
standard in random matrix theory (RMT). Throughout, the observations
satisfy (\ref{eq-model}) and the conditions stated with it. Note that
the numbered assumptions below apply only where explicitly invoked.

Finally, observe that for fixed dimension, Frobenius risk is
proportional to the mean-squared error (MSE), and dominance in one
implies dominance in the other.

\begin{exercise}[High-dimensional
regime]\protect\hypertarget{exr-hd-regime}{}\label{exr-hd-regime}

~

\begin{enumerate}
\def\labelenumi{(\alph{enumi})}
\item
  The dimension \(p=p(n)\) grows with the sample size \(n\) such that
  \(c_p:=p/n\rightarrow c\in(0,\infty)\) as \(n,p\to\infty\).
\item
  The eigenvalues \(\lambda_k(\pmb\Sigma)\) of \(\pmb\Sigma\) are
  uniformly bounded away from zero such that
  \(\lambda_- \leq \lambda_k(\pmb\Sigma) \leq \lambda_+\),
  \(k=1,\ldots,p\), for constants \(\lambda_-, \lambda_+ > 0\)
  independent of \(p\).
\item
  The empirical spectral distribution (ESD) of \(\pmb\Sigma\), which
  assigns equal weight to each eigenvalue,
  \(H_p(x) := \frac{1}{p}\sum_{k=1}^p \mathbf 1{\{\lambda_k(\pmb\Sigma) \leq x\}}\)
  converges weakly to a probability distribution function \(H\) as
  \(p \to \infty\).
\end{enumerate}

\end{exercise}

Additionally, call \(F^{\hat{\pmb\Sigma}}\), the ESD of the estimator
\(\hat{\pmb\Sigma}\), defined analogously to \(H_p\). Note that the ESD
is defined for deterministic and random matrices alike. All matrices and
vectors are understood to belong to sequences indexed by \(n\) and
\(p\); to simplify notation, dependence on \(n\) and \(p\) is suppressed
unless needed for clarity. Thus, for example, \(\hat{\pmb\Sigma}\) and
its corresponding ESD \(F^{\hat{\pmb\Sigma}}\), should be understood as
\(\hat{\pmb\Sigma}^{(p)}\) and \(F^{\hat{\pmb\Sigma}^{(p)}}\),
respectively. For results related to TME, we assume \(c_p< 1\) and
\(c <1\) so that that estimator exists for every scenario. Finally, we
adopt the notation of {[}13{]} to denote the moments
\(\beta_{k,p} := \int x^k\mathrm{d}H_p(x) = p^{-1}\text{tr} (\pmb\Sigma^k)\),
with limit
\(\beta_{k,p} \underset{n,p \rightarrow \infty}{\longrightarrow} \beta_k := \int x^k \mathrm{d}H(x)\).

We are interested in comparing the performance of three categories of
estimators of \(\pmb\Sigma\): unbiased estimators, of which the latent
sample covariance matrix \(\frac{1}{n}\mathbf G^\top \mathbf G\) is a
well-studied example, the spatial sign covariance matrix, and Tyler's
M-estimator. Analysis of the SSCM and TME, as well as their respective
tradeoffs in the conventional setting where \(p\) is fixed and
\(n \rightarrow \infty\) can be found in {[}12{]} and {[}9{]}. While
\(\mathbf G\) is generally not observable and thus the latent sample
covariance matrix is not generally attainable, it provides a useful
``oracle'' benchmark.

The sample covariance matrix, SSCM, and TME are examples of
\emph{rotationally equivariant estimators}. Such estimators,
\(\hat{\pmb\Sigma}=\hat{\pmb\Sigma}(\mathbf X )\), have the property
that for any orthogonal matrix \(\mathbf O \in \mathbb R^{p \times p}\),
\(\hat{\pmb\Sigma}(\mathbf X \mathbf O)=\mathbf O^\top\hat{\pmb\Sigma}(\mathbf X )\mathbf O\).
Thus, an orthogonal transformation of the data induces the corresponding
orthogonal transformation of the estimator, preserving its eigenvalues.

For the Gaussian sample covariance, when \(p\) is fixed, its ESD
asymptotically converges to that of the population \(\pmb \Sigma\):
e.g.~as \(n\rightarrow \infty\) and \(c_p \rightarrow 0\) we have
\(F^{\hat{\pmb\Sigma}} \rightarrow H_p\). For example, when
\(\pmb\Sigma=\mathbf I_p\), all sample eigenvalues converge to one
almost surely. Under Assumption~\ref{exr-hd-regime}, however, these
eigenvalues retain a persistent spread and their ESD converges weakly
almost surely to the Marčenko--Pastur (MP) distribution {[}14{]}, whose
continuous part has density

\[
f_{\text{MP}}(x)=\frac{\sqrt{(b-x)(x-a)}}{2\pi cx},
\qquad a<x<b,
\] where \(a=(1-\sqrt c)^2\) and \(b=(1+\sqrt c)^2\). For \(c>1\), the
distribution also has an atom of mass \(1-1/c\) at zero, reflecting the
rank deficiency of the sample covariance matrix when \(p>n\). Thus, even
for an identity population shape, sampling under the regime described by
Assumption~\ref{exr-hd-regime} produces a systematic distortion in the
estimated eigenvalues. For general \(\pmb\Sigma\), the limiting spectral
distribution of the sample covariance matrix depends on both the aspect
ratio \(c\) and the population spectrum \(H\). We denote this
generalized MP distribution \(F^{c,H}\). Mirroring the RMT literature,
we also make use of the finite-dimensional proxy \(F^{c_p, H_p}\), a
generalized MP distribution corresponding to the finite-\(p\) aspect
ratio \(c_p\) and population ESD \(H_p\); more details can be found in
{[}15{]}.

For data generated by (\ref{eq-model}) under
Assumption~\ref{exr-hd-regime}, the latent Gaussian sample covariance
matrix, SSCM and TME have been found to share a common limiting
generalized MP distribution \(F^{c,H}\) {[}6{]}, {[}13{]}, {[}15{]}.
This means that when comparing these estimators in the high-dimensional
setting, the limiting spectral distribution is insufficient to
distinguish between them even when practical differences are large, and
second-order tools are required to describe asymptotic dominance.

Our comparisons with unbiased estimators require the following
additional restriction on the mixing variables.

\begin{exercise}[Gaussian scale
mixtures]\protect\hypertarget{exr-gsm}{}\label{exr-gsm}

The mixing vector \((W_1,\ldots,W_n)\) is independent of the Gaussian
sample \((\mathbf G_1,\ldots,\mathbf G_n)\), and its distribution does
not depend on \(\pmb\Sigma\).

\end{exercise}

When Assumption~\ref{exr-gsm} holds for data generated according to
(\ref{eq-model}), the resulting observations \(\mathbf X_i\) are
frequently referred to as \emph{Gaussian scale mixtures}. Note that
unbiasedness here means
\(\mathbb{E}_{\pmb\Sigma}[\hat{\pmb\Sigma}]=\pmb\Sigma\) for every
\(\pmb\Sigma\) with \(\operatorname{vec}(\pmb\Sigma) \in\Phi\), where
\(\Phi\) is the parameter space of \(\pmb\Sigma\), represented in
\(\operatorname{vec}\) coordinates

\[
\Phi
:=
\left\{
\operatorname{vec}(\pmb\Sigma):
\pmb\Sigma\in\mathbb S_{++}^p,\ 
\operatorname{tr}(\pmb\Sigma)=p
\right\},
\] and \(\mathbb S_{++}^p\) denotes the cone of \(p\times p\) real
symmetric positive-definite matrices.

Lastly, the following propositions compile some technical details that
are useful for proofs of the main results.

\begin{proposition}[Uniform Integrability of TME-SSCM Quadratic Trace
Gap]\protect\hypertarget{prp-tme-l2}{}\label{prp-tme-l2}

Under the model of (\ref{eq-model}), suppose
Assumption~\ref{exr-hd-regime} holds with \(c_p<1\) and \(c < 1\) and
let
\(\Pi_n:=\operatorname{tr}(\hat{\pmb\Sigma}_{\mathrm{TME}}^2)-\operatorname{tr}(\hat{\pmb\Sigma}_{\mathrm{SSCM}}^2)\).
Then \(\sup_n\mathbb E|\Pi_n|^2<\infty\) and \(\{\Pi_n\}\) is uniformly
integrable.

\end{proposition}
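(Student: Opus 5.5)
The plan is to prove the $L^2$ bound $\sup_n\mathbb E|\Pi_n|^2<\infty$ and then get uniform integrability for free. A family bounded in $L^2$ is uniformly integrable (de la Vallée-Poussin with $\phi(x)=x^2$, or directly $\mathbb E|\Pi_n|\mathbf 1\{|\Pi_n|>K\}\le \sup_m\mathbb E|\Pi_m|^2/K$). So everything reduces to the second-moment bound.

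\textbf{Reduction and truncation.} Since $\mathbf X_i/\|\mathbf X_i\|=\mathbf G_i/\|\mathbf G_i\|$, and TME is invariant to rescaling each observation, both estimators are functions of the Gaussian sample only. We may therefore take $\mathbf X_i=\mathbf G_i$, and Assumption~\ref{exr-gsm} is not needed. Both $\hat{\pmb\Sigma}_{\mathrm{TME}}$ and $\hat{\pmb\Sigma}_{\mathrm{SSCM}}$ are PSD with trace $p$, so deterministically $0\le \operatorname{tr}(\hat{\pmb\Sigma}^2)\le p^2$ and $|\Pi_n|\le p^2$. Define a good event $\mathcal E_n$ on which:
\begin{itemize}
\item $\lambda_{\min}\big(\tfrac1n\mathbf G^\top\mathbf G\big)\ge \lambda_-(1-\sqrt c)^2/2$ and $\lambda_{\max}\big(\tfrac1n\mathbf G^\top\mathbf G\big)\le 2\lambda_+(1+\sqrt c)^2$;
\item $\max_i\big|\|\mathbf G_i\|^2/p-\beta_{1,p}\big|\le p^{-1/2}\log p$;
\item the analogous leave-one-out quadratic forms $\mathbf G_i^\top \mathbf A_{(i)}\mathbf G_i/p$ concentrate around their traces at the same rate.
\end{itemize}
Gaussian concentration (Davidson--Szarek for the extreme singular values, Hanson--Wright for the quadratic forms) gives $\mathbb P(\mathcal E_n^c)\le e^{-C\log^2 p}=o(p^{-4})$. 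Hence $\mathbb E[\Pi_n^2\mathbf 1_{\mathcal E_n^c}]\le p^4\,\mathbb P(\mathcal E_n^c)\to0$, and it suffices to bound $\mathbb E[\Pi_n^2\mathbf 1_{\mathcal E_n}]$. This is exactly where $c<1$ enters: it keeps the sample spectrum, and hence the TME, bounded away from zero with overwhelming probability.

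\textbf{Weighted representation on $\mathcal E_n$.} Write $\mathbf u_i=\mathbf G_i/\|\mathbf G_i\|$. Then $\hat{\pmb\Sigma}_{\mathrm{SSCM}}=\frac pn\sum_i\mathbf u_i\mathbf u_i^\top$ and $\hat{\pmb\Sigma}_{\mathrm{TME}}=\frac pn\sum_i d_i\,\mathbf u_i\mathbf u_i^\top$ with $d_i=1/(\mathbf u_i^\top\hat{\pmb\Sigma}_{\mathrm{TME}}^{-1}\mathbf u_i)$. The key step is to use the leave-one-out identity (Sherman--Morrison applied to the fixed-point equation) to show, on $\mathcal E_n$,
\[
d_i=\bar d+\delta_i,\qquad \bar d=1+O(p^{-1}),\qquad \max_i|\delta_i|\lesssim p^{-1/2}\log p ,
\]
where each $\delta_i$ is, up to an $O(p^{-1}\log^2 p)$ remainder, a centered quadratic form in $\mathbf u_i$ given the other samples. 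The bound $\bar d=1+O(p^{-1})$ follows from the constraint $\operatorname{tr}(\hat{\pmb\Sigma}_{\mathrm{TME}})=p$, which forces $\frac1n\sum_i d_i=1$. Expanding the two traces then gives
\[
\Pi_n=\frac{p^2}{n^2}\sum_{i,j}(\mathbf u_i^\top\mathbf u_j)^2\,(d_id_j-1)
= 2\frac{p^2}{n^2}\sum_{i,j}(\mathbf u_i^\top\mathbf u_j)^2\delta_i+\frac{p^2}{n^2}\sum_{i,j}(\mathbf u_i^\top\mathbf u_j)^2\delta_i\delta_j ,
\]
where the term $(\bar d^2-1)\operatorname{tr}(\hat{\pmb\Sigma}_{\mathrm{SSCM}}^2)=O(p^{-1})\cdot O(p)=O(1)$ has been absorbed after recentering $\delta_i$. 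The quadratic term is bounded by $\max_i\delta_i^2\cdot\operatorname{tr}(\hat{\pmb\Sigma}_{\mathrm{SSCM}}^2)\lesssim \log^2 p$ only crudely. To get $O(1)$ I would write it as $\frac pn\boldsymbol\delta^\top \mathbf M\boldsymbol\delta$ with $\mathbf M_{ij}=\frac pn(\mathbf u_i^\top\mathbf u_j)^2$. The off-diagonal entries of $\mathbf M$ are $O(p^{-1})$ and $\|\boldsymbol\delta\|^2=O(1)$, which handles the off-diagonal contribution, and the diagonal contribution $\frac{p^2}{n^2}\sum_i\delta_i^2$ is $O(1)$ in $L^2$. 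The linear term is $2\frac pn\sum_i\delta_i\,(\mathbf u_i^\top\hat{\pmb\Sigma}_{\mathrm{SSCM}}\mathbf u_i)$. Replacing $\mathbf u_i^\top\hat{\pmb\Sigma}_{\mathrm{SSCM}}\mathbf u_i$ by its leave-one-out deterministic equivalent $\kappa+O(p^{-1/2})$, the $\kappa$ part vanishes because $\sum_i\delta_i=0$ exactly by the trace constraint. What remains is a sum of $n$ products of two $O(p^{-1/2})$ fluctuations, which is $O(1)$ in mean. For its second moment I would use an Efron--Stein argument, or a martingale-difference decomposition over the samples, to show the centered remainder has variance $O(1)$.

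\textbf{Main obstacle.} The delicate step is the fluctuation expansion of the TME weights $d_i$, because $\hat{\pmb\Sigma}_{\mathrm{TME}}$ is implicit and every $d_i$ depends on all the samples. I would first try to import the leave-one-out analysis of TME from [6], [13]: write $\hat{\pmb\Sigma}_{\mathrm{TME}}=\hat{\pmb\Sigma}_{(i)}+\frac pn d_i\mathbf u_i\mathbf u_i^\top$, solve for $d_i$ in terms of $\mathbf u_i^\top\hat{\pmb\Sigma}_{(i)}^{-1}\mathbf u_i$, and control $\hat{\pmb\Sigma}_{(i)}$ through the contraction property of the fixed-point map on $\mathcal E_n$. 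The leave-one-out step must yield $L^4$-type bounds uniform in $i$, not merely almost-sure rates, so that the $O(1)$ cancellation in the linear term survives when squared. This uniformity is where I expect most of the technical work.
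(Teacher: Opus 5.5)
Your outer reductions match the paper's. You get uniform integrability from a uniform \(L^2\) bound, and you dispose of the bad event via \(|\Pi_n|\le p^2\) times a superpolynomially small probability; the paper does exactly this for its remainders \(r_n^{(A)}\) and \(r_n^{(B)}\). Working with Gaussian concentration directly even spares you the paper's coordinate truncation.

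The core is different, though, and as written it has a genuine gap. The paper never analyzes the TME weights. It writes \(\Pi_n=-\frac{1}{2\pi i}\oint_{\mathcal C}z^2M_n(z)\,dz\) and subtracts the constant \(\widehat M_n(z_0)\), whose integral against \(z^2\) vanishes. It then inserts the uniform increment bound \(\mathbb E|\widehat M_n(z)-\widehat M_n(z_0)|^2\le K\) from {[}13{]} (S1.39), so an \(O(1)\) second moment falls out of Cauchy--Schwarz along the contour with no logarithmic loss.

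In your route, the bounds your plan provides on \(\mathcal E_n\) are deterministic sup-norm bounds. Write \(\epsilon_i:=\mathbf u_i^\top\hat{\pmb\Sigma}_{\mathrm{SSCM}}\mathbf u_i-\kappa\). Those bounds give only \(|\Pi_n|\lesssim\log^2p\) for both the linear term \(2\frac pn\sum_i\delta_i\epsilon_i\) and the quadratic term, hence \(\mathbb E\Pi_n^2=O(\log^4p)\). Reaching \(O(1)\) requires \(\max_i\|\delta_i\mathbf 1_{\mathcal E_n}\|_{L^q}=O(p^{-1/2})\) for a fixed \(q\) (say \(q=8\)) with no logarithmic factor. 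That estimate is the entire content of the proposition, and it is precisely the step you defer.

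It is not a routine import. Sherman--Morrison gives the exact identity \(d_i=1/\{(1-c_p)\,\mathbf u_i^\top\hat{\pmb\Sigma}_{(i)}^{-1}\mathbf u_i\}\). But \(\hat{\pmb\Sigma}_{(i)}=\frac pn\sum_{j\ne i}d_j\mathbf u_j\mathbf u_j^\top\) depends on \(\mathbf u_i\) through every \(d_j\). Comparing it with the SSCM leave-one-out matrix only gives an operator-norm error of order \(\max_j|\delta_j|\), the same order as \(\delta_i\) itself.

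To write \(\delta_i\) as a centered quadratic form in \(\mathbf u_i\) with a matrix independent of \(\mathbf u_i\), plus \(O(p^{-1}\log^{C}p)\), you need a leave-one-out stability estimate for the implicit fixed point. An example is \(\max_{j\ne i}|d_j-d_j^{[i]}|=O(p^{-1}\log^{C}p)\) on \(\mathcal E_n\), where \(d^{[i]}\) are the TME weights computed without sample \(i\), proved by a contraction or monotonicity argument for the weight map. The same deterministic argument is needed even for your claim \(\max_i|\delta_i|\lesssim p^{-1/2}\log p\), because Hanson--Wright in your third bullet applies only to matrices independent of \(\mathbf G_i\). Reference {[}6{]} treats only the SSCM, and nothing the paper cites supplies uniform moment bounds on individual TME weights.

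If you did establish those bounds, the rest simplifies. You have \(\frac1n\sum_id_i=1\) exactly, so there is no \((\bar d^2-1)\) term. Minkowski plus H\"older give \(\|\frac pn\sum_i\delta_i\epsilon_i\|_{L^2}\le p\max_i\|\delta_i\|_{L^4}\|\epsilon_i\|_{L^4}=O(1)\) and the analogous bound for the quadratic term, with no need for Efron--Stein. Completed, your route would be more self-contained and would expose the mechanism behind the \(O(1)\) limit of \(\Pi_n\). The paper's Stieltjes-transform formulation is what lets it take the needed uniform second-moment control from an existing bound instead.
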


\begin{proposition}[Estimator Bias
Control]\protect\hypertarget{prp-estimator-control}{}\label{prp-estimator-control}

Under the model of (\ref{eq-model}), suppose
Assumption~\ref{exr-hd-regime} holds. Then
\(\|\mathbb E (\hat{\pmb\Sigma}_{\text{SSCM}}) - \pmb\Sigma\|_F^2 = \mathcal O(1/p)\).
When we additionally have \(c_p, c < 1\),
\(\|\mathbb E (\hat{\pmb\Sigma}_{\text{TME}}) - \pmb\Sigma\|_F^2 = \mathcal O(1/p)\)
and
\(\text{tr}(\mathbb E (\hat{\pmb\Sigma}_{\text{TME}})\pmb\Sigma) = \text{tr}(\pmb\Sigma^2) + \mathcal O(1/p)\).

\end{proposition}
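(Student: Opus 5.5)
We need to prove Proposition (Estimator Bias Control): the SSCM bias squared Frobenius is O(1/p), and, when c<1, the same holds for TME, together with the trace identity.

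By the remark after (\ref{eq-sscm}), I replace $\mathbf X_i$ with $\mathbf G_i\sim N_p(\mathbf 0,\pmb\Sigma)$ throughout. Rotational equivariance lets me take $\pmb\Sigma=\operatorname{diag}(\lambda_1,\dots,\lambda_p)$ without loss of generality; the Frobenius norm is invariant under this change. I also write $\mathbf G_i=\pmb\Sigma^{1/2}\mathbf Z_i$ with $\mathbf Z_i\sim N_p(\mathbf 0,\mathbf I_p)$.

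\textbf{SSCM.} Here $\mathbb E\hat{\pmb\Sigma}_{\mathrm{SSCM}}=p\,\mathbb E[\mathbf G\mathbf G^\top/\|\mathbf G\|^2]$ for a single observation. By sign-flip symmetry of the coordinates, this matrix is diagonal, with entries
\[
d_k=p\,\mathbb E\!\left[\lambda_kZ_k^2\Big/\textstyle\sum_j\lambda_jZ_j^2\right].
\]
Write $Q=\sum_j\lambda_jZ_j^2$, so $\mathbb EQ=p$ and $\operatorname{Var}Q=2\operatorname{tr}\pmb\Sigma^2=O(p)$. I expand $1/Q$ around $1/p$:
\[
\frac1Q=\frac1p-\frac{Q-p}{p^2}+\frac{(Q-p)^2}{p^2Q}.
\]
\begin{itemize}
\item The first term gives $\lambda_k$ exactly.
\item The second gives $-\mathbb E[\lambda_kZ_k^2(Q-p)]/p=-2\lambda_k^2/p$, using $\operatorname{Cov}(Z_k^2,Q)=2\lambda_k$.
\item The remainder is bounded by $O(1/p)$ uniformly in $k$. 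I use Cauchy–Schwarz, $\mathbb E(Q-p)^4=O(p^2)$, and $\mathbb E Q^{-2}\lambda_k^2Z_k^4=O(p^{-2})$. The last bound holds because $Q\ge\lambda_-\chi^2_p$, whose negative moments are of order $p^{-m}$ for $p$ large (after separating out the $Z_k$ term, as needed).
\end{itemize}
Hence $|d_k-\lambda_k|=O(1/p)$ uniformly, using Assumption~\ref{exr-hd-regime}(b), and so $\|\mathbb E\hat{\pmb\Sigma}_{\mathrm{SSCM}}-\pmb\Sigma\|_F^2=\sum_k(d_k-\lambda_k)^2=O(p\cdot p^{-2})=O(1/p)$.

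\textbf{TME.} Equivariance again makes $\mathbb E\hat{\pmb\Sigma}_{\mathrm{TME}}$ diagonal when $\pmb\Sigma$ is diagonal, since it commutes with sign flips. Affine equivariance of TME gives $\hat{\pmb\Sigma}_{\mathrm{TME}}=p\,\pmb\Sigma^{1/2}\hat{\mathbf V}\pmb\Sigma^{1/2}/\operatorname{tr}(\pmb\Sigma\hat{\mathbf V})$, where $\hat{\mathbf V}$ is the TME computed from the $\mathbf Z_i$. The distribution of $\hat{\mathbf V}$ is orthogonally invariant and $\operatorname{tr}\hat{\mathbf V}=p$. The main obstacle is the random normalizer $\operatorname{tr}(\pmb\Sigma\hat{\mathbf V})/p$, because without it $\mathbb E\hat{\mathbf V}=\mathbf I$ exactly and the result would be immediate.

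My plan is to write $\hat{\mathbf V}=\mathbf I+\mathbf E$ and set $\tau=\operatorname{tr}(\pmb\Sigma\mathbf E)/p$. Then
\[
\frac1{1+\tau}=1-\tau+\frac{\tau^2}{1+\tau}.
\]
The first term contributes $\pmb\Sigma+\mathbb E[\pmb\Sigma^{1/2}\mathbf E\pmb\Sigma^{1/2}]=\pmb\Sigma$. The linear term $\mathbb E[\tau\,\pmb\Sigma^{1/2}(\mathbf I+\mathbf E)\pmb\Sigma^{1/2}]$ needs second moments of $\mathbf E$. By orthogonal invariance, these are determined by $\mathbb E\operatorname{tr}\mathbf E^2$ and $\mathbb E(\operatorname{tr}\mathbf E)^2=0$. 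From known TME fluctuation results (or by comparing with $\Pi_n$ in Proposition~\ref{prp-tme-l2} together with the MP second moment), $\mathbb E\operatorname{tr}\mathbf E^2=O(p)$, and I expect $\mathbb E\tau^2=O(p^{-2})$. Each diagonal entry of the correction is then $O(1/p)$ uniformly, which again gives Frobenius squared $O(1/p)$. The remainder requires a lower bound on $1+\tau=\operatorname{tr}(\pmb\Sigma\hat{\mathbf V})/p\ge\lambda_-$, and this holds deterministically. This is exactly where the bounds of Proposition~\ref{prp-tme-l2} and $c<1$ (existence of TME and moment bounds) are used.

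Finally, the trace identity follows from the diagonal structure: $\operatorname{tr}(\mathbb E\hat{\pmb\Sigma}_{\mathrm{TME}}\pmb\Sigma)=\sum_k\lambda_k d_k$. Here the deviations $d_k-\lambda_k$ carry the first-order form $\lambda_k(\cdot)-(\cdot)\lambda_k^2$ with common $O(1/p)$ coefficients, and the trace constraint $\sum_k d_k=p$ forces cancellation of the individual errors. This leaves a total of $O(1/p)$ rather than the naive $O(1)$. I flag verifying this cancellation to second order as the delicate step.
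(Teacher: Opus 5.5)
Your SSCM computation is correct and self-contained. It is more elementary than the paper, which imports $\|\pmb\Gamma_{\mathrm{SSCM}}-\mathbf T\|_{\mathrm{op}}=o(p^{-1})$ from {[}6{]}. Your TME bias argument follows the paper's route: the affine-equivariant representation, the split $(1+\tau)^{-1}=1-\tau+\tau^2/(1+\tau)$, the deterministic bound $1+\tau\ge\lambda_-$, and $\mathbb E\|\mathbf E\|_F^2=\mathcal O(p)$ via Proposition~\ref{prp-tme-l2}.

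Two steps there should be made firm rather than expected:
\begin{itemize}
\item Since $\operatorname{tr}\mathbf E=0$, you have $\tau=p^{-1}\operatorname{tr}(\pmb\Delta\mathbf E)$ with $\pmb\Delta=\pmb\Sigma-\mathbf I$. The invariance identity you allude to (the paper's Lemma~\ref{lem-haar}) gives $\mathbb E[\operatorname{tr}(\pmb\Delta\mathbf E)^2]=2\|\pmb\Delta\|_F^2\,\mathbb E\|\mathbf E\|_F^2/((p-1)(p+2))=\mathcal O(1)$, so $\mathbb E\tau^2=\mathcal O(p^{-2})$.
\item Because $\hat{\mathbf V}_{kk}$ has no uniform bound, control the quadratic remainder in aggregate. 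Its diagonal entries are nonnegative and sum exactly to $\mathbb E[\tau^2(1+\tau)^{-1}\operatorname{tr}(\pmb\Sigma\hat{\mathbf V})]=p\,\mathbb E\tau^2=\mathcal O(1/p)$.
\end{itemize}

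The genuine gap is the trace identity. You propose that deviations $d_k-\lambda_k=\alpha\lambda_k-\beta\lambda_k^2$ with $\alpha,\beta$ of order $1/p$, together with $\sum_k d_k=p$, force $\sum_k\lambda_k(d_k-\lambda_k)=\mathcal O(1/p)$. This mechanism is false. The constraint only gives $\alpha=\beta\beta_{2,p}$, and then $\sum_k\lambda_k(d_k-\lambda_k)=\beta p(\beta_{2,p}^2-\beta_{3,p})$. That is of order one whenever $\beta\asymp 1/p$ and $\pmb\Sigma\ne\mathbf I$, since Cauchy--Schwarz gives $\beta_{2,p}^2\le\beta_{3,p}$ with equality only at $\pmb\Sigma=\mathbf I$.

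The SSCM is a concrete counterexample. It has $\xi_k-\lambda_k=\frac{2}{p}(\beta_{2,p}\lambda_k-\lambda_k^2)+o(1/p)$ and $\operatorname{tr}\pmb\Gamma_{\mathrm{SSCM}}=p$. Yet $\operatorname{tr}(\pmb\Gamma_{\mathrm{SSCM}}\pmb\Sigma)-\operatorname{tr}(\pmb\Sigma^2)=2(\beta_{2,p}^2-\beta_{3,p})+o(1)$, which is exactly the source of the $4(\beta_2^2-\beta_3)$ term in the proof of Lemma~\ref{lem-errors}. So no second-order verification can rescue the cancellation. The statement holds for TME only because its correction is a full order smaller than the SSCM's, and your entrywise $\mathcal O(1/p)$ bound discards that information.

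Your own ingredients suffice to fix this:
\begin{itemize}
\item The invariance identity gives $\mathbb E[\tau\mathbf E]=\kappa\pmb\Delta$ with $\kappa=2\,\mathbb E\|\mathbf E\|_F^2/(p(p-1)(p+2))=\mathcal O(p^{-2})$. Hence the linear term contributes $-\kappa\operatorname{tr}(\pmb\Sigma^3-\pmb\Sigma^2)=\mathcal O(1/p)$.
\item The quadratic term contributes $\mathbb E[\tau^2(1+\tau)^{-1}\operatorname{tr}(\pmb\Sigma^2\hat{\mathbf V})]$, which lies in $[0,\lambda_+^2\lambda_-^{-1}p\,\mathbb E\tau^2]$ and is therefore also $\mathcal O(1/p)$.
\end{itemize}
The paper performs the same computation directly on $\operatorname{tr}(\hat{\pmb\Sigma}_{\mathrm{TME}}\pmb\Sigma)=(1+u)^{-1}[\operatorname{tr}(\pmb\Sigma^2)+\operatorname{tr}(\mathbf B\hat{\mathbf E})]$ with $\mathbf B=\pmb\Sigma^2-\beta_{2,p}\mathbf I$. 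It bounds $\mathbb E[u\operatorname{tr}(\mathbf B\hat{\mathbf E})]$ by Lemma~\ref{lem-haar}.
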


Because Proposition~\ref{prp-tme-l2} and
Proposition~\ref{prp-estimator-control} are auxiliary to the main
results and their proofs require comparatively involved
estimator-specific analysis, we provide the details in the online
\hyperref[sec-supplement]{Supplement} to this paper.

\section{MAIN RESULTS}\label{main-results}

Conveniently, for rotationally equivariant estimators, our risk function
of interest can be written in terms of linear spectral statistics (LSS).

\begin{definition}[Centered Linear Spectral
Statistics]\protect\hypertarget{def-centered-lss}{}\label{def-centered-lss}

Let \(\pmb \Gamma \in \mathbb R^{p \times p}\) be a deterministic
positive-semidefinite matrix with ESD \(K_p\). The \emph{centered linear
spectral statistics} of an estimator
\(\hat{\pmb\Sigma} \in \mathbb R^{p \times p}\) are defined as:

\[
L_{\pmb \Gamma}^{\hat{\pmb\Sigma}}(f) := \int f(x)\mathrm{d}L_{\pmb \Gamma}^{\hat{\pmb\Sigma}}(x) = \int f(x)\mathrm{d}[F^{\hat{\pmb\Sigma}}(x)-F^{c_p, K_p}(x)]
\]

where \(f\) is a function on \([0, \infty)\).

\end{definition}

Definition~\ref{def-centered-lss} follows the conventions of Equation
2.10 in {[}6{]}. Next, we write our risk function in terms of LSS.

\begin{lemma}[Risk
Decomposition]\protect\hypertarget{lem-bias-variance}{}\label{lem-bias-variance}

For any positive-semidefinite rotationally equivariant matrix estimator
\(\hat{\pmb\Sigma}\) with \(\mathbb E(\hat{\pmb\Sigma}) = \pmb \Gamma\)
and \(\operatorname{tr}(\pmb\Gamma)=p\), the loss can be expressed as,

\begin{equation}\phantomsection\label{eq-risk-decomp}{
\mathbb E\|\hat{\pmb\Sigma} - \pmb\Sigma\|_F^2 = pc_p + p\mathbb E(L_{\pmb \Gamma}^{\hat{\pmb\Sigma}}(x^2)) + \|\pmb \Gamma - \pmb\Sigma\|_F^2.
}\end{equation}

\begin{proof}
Call \(K_p\) the ESD of \(\pmb \Gamma\). Then we can write:

\[
\begin{aligned}
\mathbb E\|\hat{\pmb\Sigma} - \pmb\Sigma\|_F^2 &=  \mathbb E\text{tr}(\hat{\pmb\Sigma}^2)+ \text{tr}(\pmb\Sigma^2) - 2\text{tr}(\mathbb E(\hat{\pmb\Sigma})\pmb\Sigma)\\
& = p \mathbb E\bigg[\int x^2 \mathrm{d}F^{\hat{\pmb \Sigma}}(x)\bigg] + \text{tr}(\pmb\Sigma^2)- 2\text{tr}(\pmb\Gamma\pmb\Sigma)\\
&= p\int x^2 \mathrm{d}F^{c_p, K_p}(x)+ p\mathbb E[L_{\pmb \Gamma}^{\hat{\pmb\Sigma}}(x^2)] + \text{tr}(\pmb\Sigma^2) \\ &\quad - 2\text{tr}(\pmb\Gamma\pmb\Sigma) \\
&= pc_p+ p\mathbb E[L_{\pmb \Gamma}^{\hat{\pmb\Sigma}}(x^2)] +\text{tr}(\pmb \Gamma^2) + \text{tr}(\pmb\Sigma^2) \\ &\quad- 2\text{tr}(\pmb\Gamma\pmb\Sigma) \\
&= pc_p+ p\mathbb E[L_{\pmb \Gamma}^{\hat{\pmb\Sigma}}(x^2)] + \|\pmb \Gamma - \pmb\Sigma\|_F^2.
\end{aligned}
\]
\end{proof}

\end{lemma}

The decomposition in Lemma~\ref{lem-bias-variance} is primarily a
bookkeeping device: it further separates the variance term in the
bias-variance decomposition of MSE into an irreducible contribution from
the Mar\v{c}enko-Pastur limit, plus a deviation,
\(\mathbb E(L_{\pmb \Gamma}^{\hat{\pmb\Sigma}}(x^2))\). Note that this
is an exact result; even for finite \(n,p\),
\(\mathbb E(L_{\pmb \Gamma}^{\hat{\pmb\Sigma}}(x^2))\) provides a way to
compare the pre-asymptotic risk of estimators that share a limiting
spectral distribution.

In special cases, this expectation can be derived analytically. In
general, however, the exact distribution of
\(L_{\pmb \Gamma}^{\hat{\pmb\Sigma}}(x^2)\) --- and thus its expectation
--- are unknown for an arbitrary estimator. For certain estimators,
central limit theorems for linear spectral statistics provide a way to
quantify how such estimators compare in the high-dimensional limit.

In particular, {[}15{]} derives a central limit theorem for the linear
spectral statistics of sample covariance matrices on certain
distributions. These were recently extended to a central limit theorem
for LSS of the SSCM by {[}6{]}, and to those of Tyler's M-estimator in
{[}13{]}. These papers employ a deterministic centering matrix,
specialized to (\ref{eq-model}) as \[
\mathbf T := \pmb\Sigma-\frac{2}{p}\bigg(\pmb\Sigma^2-\beta_{2,p}\pmb\Sigma\bigg),
\] to account for the nonlinearities induced by the denominators of the
SSCM and TME. In particular, {[}6{]} show that \(\mathbf T\)
approximates
\(\pmb\Gamma_{\text{SSCM}}:= \mathbb E(\hat{\pmb\Sigma}_{\mathrm{SSCM}})\)
to order \(o(p^{-1})\) in operator norm, which we denote
\(\|\cdot\|_{\text{op}}\) throughout. Finally, define the TME population
mean similarly as
\(\pmb\Gamma_{\text{TME}}:= \mathbb E(\hat{\pmb\Sigma}_{\mathrm{TME}})\).

For the spatial sign covariance matrix and Tyler's M-estimator, we can
quantify the LSS means of Lemma~\ref{lem-bias-variance} using the
following lemma.

\begin{lemma}[Naturally-Centered LSS Means for SSCM and Tyler's
M-Estimator]\protect\hypertarget{lem-errors}{}\label{lem-errors}

Suppose we have data generated by the model defined in (\ref{eq-model}).
Then, the following hold true.

\begin{enumerate}
\def\labelenumi{\alph{enumi})}
\tightlist
\item
  For the SSCM, for any \(n,p\),
  \(p\mathbb E(L_{\pmb \Gamma_{\text{SSCM}}}^{\hat{\pmb\Sigma}_\text{SSCM}}(x^2)) =-\frac{1}{n}\text{tr}(\pmb \Gamma_\text{SSCM}^2)\).
  When Assumption~\ref{exr-hd-regime} holds,
  \(p\mathbb E(L^{\hat{\pmb\Sigma}_\text{SSCM}}_{\pmb \Gamma_{\text{SSCM}}}(x^2)) \longrightarrow -c\beta_2\)
  as \(n,p \rightarrow \infty\);
\item
  Suppose Assumption~\ref{exr-hd-regime} holds with \(c_p, c<1\) so that
  Tyler's M-estimator exists. Then
  \(p\mathbb E(L_{\pmb \Gamma_{\text{TME}}}^{\hat{\pmb\Sigma}_\text{TME}}(x^2)) \longrightarrow c\beta_2+\frac{2c}{(1-c)}\)
  as \(n,p \rightarrow \infty\).
\end{enumerate}

\end{lemma}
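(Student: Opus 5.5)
For part a) the identity is exact, so I would compute both sides directly. Since $\hat{\pmb\Sigma}_{\text{SSCM}}$ is rotationally equivariant with $\mathbb E\hat{\pmb\Sigma}_{\text{SSCM}}=\pmb\Gamma_{\text{SSCM}}$ and $\operatorname{tr}(\pmb\Gamma_{\text{SSCM}})=p$, the first step is to use the second-moment formula for the generalized MP law, $\int x^2\,\mathrm dF^{c,K}=\int x^2\,\mathrm dK+c(\int x\,\mathrm dK)^2$. With $\int x\,\mathrm dK_p=1$ this gives $p\int x^2\,\mathrm dF^{c_p,K_p}=\operatorname{tr}(\pmb\Gamma_{\text{SSCM}}^2)+pc_p$; this is the same identity already used in Lemma~\ref{lem-bias-variance}.

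Next, write $\mathbf u_i=\mathbf X_i/\|\mathbf X_i\|=\mathbf G_i/\|\mathbf G_i\|$, so that
\[
\operatorname{tr}(\hat{\pmb\Sigma}_{\text{SSCM}}^2)=\frac{p^2}{n^2}\sum_{i,j}(\mathbf u_i^\top\mathbf u_j)^2 .
\]
The $n$ diagonal terms each equal one and contribute $p^2/n=pc_p$. The $\mathbf u_i$ are i.i.d. because the $\mathbf G_i$ are, so for $i\neq j$ we get $\mathbb E(\mathbf u_i^\top\mathbf u_j)^2=\operatorname{tr}(\mathbb E\mathbf u\mathbf u^\top\,\mathbb E\mathbf u\mathbf u^\top)=p^{-2}\operatorname{tr}(\pmb\Gamma_{\text{SSCM}}^2)$. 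Summing,
\[
\mathbb E\operatorname{tr}(\hat{\pmb\Sigma}_{\text{SSCM}}^2)=pc_p+\frac{n-1}{n}\operatorname{tr}(\pmb\Gamma_{\text{SSCM}}^2),
\]
and subtracting the previous display gives exactly $-\frac1n\operatorname{tr}(\pmb\Gamma_{\text{SSCM}}^2)$.

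For the limit I would write this as $-c_p\,p^{-1}\operatorname{tr}(\pmb\Gamma_{\text{SSCM}}^2)$. By Proposition~\ref{prp-estimator-control}, $\|\pmb\Gamma_{\text{SSCM}}-\pmb\Sigma\|_F=\mathcal O(p^{-1/2})$, and $\|\pmb\Sigma\|_F=\mathcal O(p^{1/2})$ by Assumption~\ref{exr-hd-regime}(b). Cauchy--Schwarz then gives $p^{-1}\operatorname{tr}(\pmb\Gamma^2_{\text{SSCM}})=\beta_{2,p}+\mathcal O(p^{-1})$. Since $\beta_{2,p}\to\beta_2$ (bounded support under Assumption~\ref{exr-hd-regime}(b),(c)) and $c_p\to c$, the limit $-c\beta_2$ follows.

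For part b) no exact formula is available, so I would reduce to part a) plus the gap $\Pi_n$. Applying the same MP second-moment identity with centering $\pmb\Gamma_{\text{TME}}$ gives
\[
p\mathbb E L^{\hat{\pmb\Sigma}_{\text{TME}}}_{\pmb\Gamma_{\text{TME}}}(x^2)
=\underbrace{\big[\mathbb E\operatorname{tr}(\hat{\pmb\Sigma}_{\text{SSCM}}^2)-\operatorname{tr}(\pmb\Gamma_{\text{SSCM}}^2)-pc_p\big]}_{\to -c\beta_2}
+\big[\operatorname{tr}(\pmb\Gamma_{\text{SSCM}}^2)-\operatorname{tr}(\pmb\Gamma_{\text{TME}}^2)\big]+\mathbb E\Pi_n .
\]
The middle bracket I would handle by expanding $\operatorname{tr}(\pmb\Gamma^2)-\operatorname{tr}(\pmb\Sigma^2)=\|\pmb\Gamma-\pmb\Sigma\|_F^2+2\operatorname{tr}((\pmb\Gamma-\pmb\Sigma)\pmb\Sigma)$ for each estimator.
\begin{itemize}
\item For TME, Proposition~\ref{prp-estimator-control} makes both terms $\mathcal O(1/p)$.
\item For SSCM, I would replace $\pmb\Gamma_{\text{SSCM}}$ by $\mathbf T$, with error $o(p^{-1})$ in operator norm and hence $o(1)$ against $\pmb\Sigma$ in trace. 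This yields the nonvanishing correction $\operatorname{tr}(\pmb\Gamma_{\text{SSCM}}^2)-\operatorname{tr}(\pmb\Sigma^2)\to-4(\beta_3-\beta_2^2)$.
\end{itemize}

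For $\mathbb E\Pi_n$, I would write $\Pi_n=p\,[L^{\hat{\pmb\Sigma}_{\text{TME}}}_{\mathbf T}(x^2)-L^{\hat{\pmb\Sigma}_{\text{SSCM}}}_{\mathbf T}(x^2)]$, since the common centering at $F^{c_p,\mathbf T}$ cancels. The LSS CLTs of [6] for SSCM and [13] for TME, both centered at $\mathbf T$, give joint, or at least marginal-mean, Gaussian limits, so $\Pi_n$ converges in distribution with limiting mean equal to the difference of the two CLT means at $f(x)=x^2$. By Proposition~\ref{prp-tme-l2}, $\{\Pi_n\}$ is uniformly integrable, so $\mathbb E\Pi_n$ converges to that difference. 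Adding the three pieces should produce $c\beta_2+2c/(1-c)$: the $\beta_3-\beta_2^2$ terms from the centering shift should cancel against those in the CLT mean difference, leaving the TME-specific inflation $2c/(1-c)$ and a net $+2c\beta_2$.

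The main obstacle is this last bookkeeping. I must extract the two CLT means at $x^2$ in compatible centering conventions (Equation 2.10 of [6] versus [13]) and verify the exact cancellation of the $\beta_3$ terms. I must also confirm that convergence in distribution of $\Pi_n$ holds; this follows most easily by showing that the difference of LSS is tight and determined by the marginal limits. I would first try to compute $\lim\mathbb E\Pi_n$ directly from the explicit mean formulas rather than from a joint CLT.
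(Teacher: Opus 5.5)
Your part a) is correct and is essentially the paper's argument. Your three-term split in b) is algebraically the same as the paper's: the paper adds the population-centered SSCM mean $-c\beta_2+4(\beta_2^2-\beta_3)$ to $\lim\mathbb E\Pi_n$, then recenters at $\pmb\Gamma_{\mathrm{TME}}$ using Proposition~\ref{prp-estimator-control}.

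The gap is that b) has exactly one step with real content, the value of $\lim\mathbb E\Pi_n$, and you never obtain it. Your remark that the $\beta_3-\beta_2^2$ terms ``should cancel'' and leave $\frac{2c}{1-c}$ plus a net $2c\beta_2$ is the conclusion of b) read backwards. Nothing in your argument can generate the TME-specific term $\frac{2c}{1-c}$; it must be imported from the analysis of Tyler's M-estimator in [13]. The paper takes from [13, S5.3] the deterministic limit
\[
\Pi_n\longrightarrow\frac{2c}{1-c}+2c\beta_2+4(\beta_3-\beta_2^2)
\]
and transfers it to $\mathbb E\Pi_n$ via the uniform integrability in Proposition~\ref{prp-tme-l2}. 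With that input, your three pieces do sum to $c\beta_2+\frac{2c}{1-c}$.

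Separately, the inference that marginal Gaussian limits make $\Pi_n$ converge in distribution is not valid. Your suggested repair, that the difference is ``determined by the marginal limits,'' is false at the level of laws. It is also unnecessary, because only the limiting mean is needed, and that is determined by the marginals. The pairs $\bigl(pL^{\hat{\pmb\Sigma}_{\mathrm{TME}}}_{\mathbf T}(x^2),\,pL^{\hat{\pmb\Sigma}_{\mathrm{SSCM}}}_{\mathbf T}(x^2)\bigr)$ are tight. So every subsequence has a further subsequence converging jointly to some $(A,B)$ whose marginals are the two Gaussian CLT limits. Along it $\Pi_n\Rightarrow A-B$, and uniform integrability gives $\mathbb E\Pi_n\to\mathbb E A-\mathbb E B$, a value independent of the subsequence.

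So your route through the two CLT means needs no joint CLT. It becomes a proof, however, only once you actually extract the TME mean at $f(x)=x^2$ under the $\mathbf T$-centering of [13]. Consistency with the paper forces that mean to be $c\beta_2+\frac{2c}{1-c}+4(\beta_3-\beta_2^2)$, versus $-c\beta_2$ for the SSCM. This is the same external input the paper uses, in the form of the limit of $\Pi_n$.
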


\begin{proof}
For the SSCM, let \(\mathbf Y_i=\mathbf X_i/\|\mathbf X_i\|\) and
\(\pmb\Gamma_{\mathrm{SSCM}}=p\mathbb E(\mathbf Y_i\mathbf Y_i^\top)\).
Using \(\|\mathbf Y_i\|=1\) and independent observations, \[
\mathbb E\operatorname{tr}(\hat{\pmb\Sigma}_{\mathrm{SSCM}}^2)
=
pc_p+\left(1-\frac{1}{n}\right)
\operatorname{tr}(\pmb\Gamma_{\mathrm{SSCM}}^2).
\] Matching terms with Lemma~\ref{lem-bias-variance} therefore gives \[
p\mathbb E\!\left[
L_{\pmb\Gamma_{\mathrm{SSCM}}}^{\hat{\pmb\Sigma}_{\mathrm{SSCM}}}(x^2)
\right]
=
-\frac{1}{n}\operatorname{tr}(\pmb\Gamma_{\mathrm{SSCM}}^2)
 \underset{n,p \rightarrow\infty}{\longrightarrow} -c\beta_2,
\] where the limit follows from
\(\|\pmb\Gamma_{\mathrm{SSCM}}-\mathbf T\|_{\mathrm{op}}=o(p^{-1})\) and
\(\mathbf T=\pmb\Sigma+\mathcal O(p^{-1})\) under the bounded-spectrum
assumption {[}6{]}, so that

\[
\begin{aligned}
\operatorname{tr}(\pmb\Gamma_{\mathrm{SSCM}}^2) 
&=\operatorname{tr}(\mathbf T^2)+o(1)\\
&=\operatorname{tr}(\pmb\Sigma^2)+\frac{4}{p}\left[\beta_{2,p}\operatorname{tr}(\pmb\Sigma^2)-\operatorname{tr}(\pmb\Sigma^3)\right]+o(1)\\
&=\operatorname{tr}(\pmb\Sigma^2)+4(\beta_{2,p}^2-\beta_{3,p})+o(1).
\end{aligned}
\]

Observe that under this construction, we have a population-centered LSS
mean for the SSCM given by \[
\begin{aligned}
\mathbb E\operatorname{tr}(&\hat{\pmb\Sigma}_{\mathrm{SSCM}}^2)
-\operatorname{tr}(\pmb\Sigma^2)-pc_p \\
&=-\frac1n\operatorname{tr}(\pmb\Gamma_{\mathrm{SSCM}}^2) 
+\operatorname{tr}(\pmb\Gamma_{\mathrm{SSCM}}^2) -\operatorname{tr}(\pmb\Sigma^2)\\
&\underset{n,p \rightarrow\infty}{\longrightarrow}  -c\beta_2+4(\beta_2^2-\beta_3).
\end{aligned}
\]

For TME, Proposition~\ref{prp-tme-l2} implies that the limiting value of
\(\Pi_n\) given by ({[}13{]}, S5.3) holds in expectation:

\[
\mathbb E\operatorname{tr}(\hat{\pmb\Sigma}_{\mathrm{TME}}^2)-\mathbb E\operatorname{tr}(\hat{\pmb\Sigma}_{\mathrm{SSCM}}^2) \underset{n,p \rightarrow\infty}{\longrightarrow}  \frac{2c}{1-c}+2c\beta_2+4(\beta_3-\beta_2^2).
\] Adding this to the population-centered LSS mean for the SSCM, we get
\[
\mathbb E\operatorname{tr}(\hat{\pmb\Sigma}_{\mathrm{TME}}^2) - \text{tr}(\pmb\Sigma^2) -pc_p \underset{n,p \rightarrow\infty}{\longrightarrow} c\beta_2 + \frac{2c}{1-c}.
\]

Then by Lemma~\ref{lem-bias-variance},
\(p\mathbb E[L_{\pmb \Gamma_\text{TME}}^{\hat{\pmb\Sigma}_\text{TME}}(x^2)]=\mathbb E \text{tr}(\hat{\pmb\Sigma}_\text{TME}^2)-pc_p-\text{tr}(\pmb \Gamma_\text{TME}^2)\).
Since Proposition~\ref{prp-estimator-control} gives
\(\|\mathbb E (\hat{\pmb\Sigma}_{\text{TME}}) - \pmb\Sigma\|_F^2 = \mathcal O(1/p)\)
and
\(\text{tr}(\mathbb E (\hat{\pmb\Sigma}_{\text{TME}})\pmb\Sigma) = \text{tr}(\pmb\Sigma^2) + \mathcal O(1/p)\),
the result follows.
\end{proof}

The following result shows that, in a finite \(n,p\) setting, the SSCM
can achieve lower risk than any unbiased shape estimator for a Gaussian
scale mixture.

\begin{theorem}[Exact Frobenius Risk Dominance of SSCM Over Unbiased
Estimators For
GSM]\protect\hypertarget{thm-frob-lss}{}\label{thm-frob-lss}

For a sample generated by (\ref{eq-model}) under
Assumption~\ref{exr-gsm}, let \(\hat{\pmb\Sigma}\) be a symmetric
matrix-valued estimator of \(\pmb\Sigma\) that is unbiased on \(\Phi\).
Further, assume that the Fisher information matrix for the shape matrix
exists. If
\(\|\pmb\Gamma_{\mathrm{SSCM}}-\pmb\Sigma\|_F^2 \leq \frac{1}{n}\operatorname{tr}(\pmb\Gamma_{\mathrm{SSCM}}^2)+\frac{1}{n}\operatorname{tr}(\pmb\Sigma^2) - \frac{2}{n}\text{tr}(\pmb\Sigma^4)\text{tr}(\pmb\Sigma^2)^{-1}\),
then
\(\mathbb E\|\hat{\pmb\Sigma}_{\text{SSCM}} - \pmb\Sigma\|_F^2 \leq \mathbb E\|\hat{\pmb\Sigma} - \pmb\Sigma\|_F^2\).
The SSCM's risk advantage over any such estimator is at least the margin
by which this condition is satisfied.

\end{theorem}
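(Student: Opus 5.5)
The plan is to compare an exact expression for the SSCM risk with a Cramér--Rao lower bound for any unbiased estimator. Both quantities are finite-sample, and the stated condition is precisely the inequality between them.

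\textbf{Step 1: exact SSCM risk.} I would apply Lemma~\ref{lem-bias-variance} with $\pmb\Gamma=\pmb\Gamma_{\mathrm{SSCM}}$ (which has trace $p$ because $\|\mathbf Y_i\|=1$) together with part a) of Lemma~\ref{lem-errors}. This gives
\[
\mathbb E\|\hat{\pmb\Sigma}_{\mathrm{SSCM}}-\pmb\Sigma\|_F^2 = pc_p-\tfrac1n\operatorname{tr}(\pmb\Gamma_{\mathrm{SSCM}}^2)+\|\pmb\Gamma_{\mathrm{SSCM}}-\pmb\Sigma\|_F^2 .
\]

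\textbf{Step 2: lower bound for unbiased estimators.} Under Assumption~\ref{exr-gsm} the data carry at least as much information about $\pmb\Sigma$ as the latent Gaussian sample $\mathbf G$. Concretely, conditioning on $(W_i)$, whose law is free of $\pmb\Sigma$, yields the bound; alternatively, one can argue that the Fisher information of $\mathbf X$ is dominated by that of the pair $(\mathbf G,\mathbf W)$, which equals the Gaussian information. So any estimator unbiased on $\Phi$ has total variance at least $\operatorname{tr}$ of the constrained Cramér--Rao bound for the Gaussian covariance. Working in $\operatorname{vec}$ coordinates with the constraint $\operatorname{tr}(\pmb\Sigma)=p$, I would use the constrained CRB of Gorman--Hero / Stoica--Ng type: the bound is $\mathbf U(\mathbf U^\top \mathbf J \mathbf U)^{-1}\mathbf U^\top$, where $\mathbf U$ spans the tangent space $\{\operatorname{vec}(\mathbf D):\operatorname{tr}\mathbf D=0\}$. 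For a Gaussian sample of size $n$, the unconstrained covariance $(\mathbf I+\mathbf K)(\pmb\Sigma\otimes\pmb\Sigma)/n$ has trace $\frac1n[\operatorname{tr}(\pmb\Sigma)^2+\operatorname{tr}(\pmb\Sigma^2)]=pc_p+\frac1n\operatorname{tr}(\pmb\Sigma^2)$. Projecting off the constraint direction in the information metric should subtract a rank-one term of size $\frac{2}{n}\operatorname{tr}(\pmb\Sigma^4)/\operatorname{tr}(\pmb\Sigma^2)$. This corresponds to the normal direction $\operatorname{vec}(\mathbf I)$ mapped through $\pmb\Sigma\otimes\pmb\Sigma$, namely $\operatorname{vec}(\pmb\Sigma^2)$, with normalization $\operatorname{tr}(\pmb\Sigma^2)$. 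Hence
\[
\mathbb E\|\hat{\pmb\Sigma}-\pmb\Sigma\|_F^2\ \ge\ pc_p+\tfrac1n\operatorname{tr}(\pmb\Sigma^2)-\tfrac2n\operatorname{tr}(\pmb\Sigma^4)\operatorname{tr}(\pmb\Sigma^2)^{-1}.
\]
The assumption that the Fisher information exists is used here to justify the CRB regularity conditions.

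\textbf{Step 3: compare.} Subtracting the expression in Step 1 from the bound in Step 2 gives
\[
\tfrac1n\operatorname{tr}(\pmb\Gamma_{\mathrm{SSCM}}^2)+\tfrac1n\operatorname{tr}(\pmb\Sigma^2)-\tfrac2n\operatorname{tr}(\pmb\Sigma^4)\operatorname{tr}(\pmb\Sigma^2)^{-1}-\|\pmb\Gamma_{\mathrm{SSCM}}-\pmb\Sigma\|_F^2,
\]
which is nonnegative exactly under the hypothesis. This quantity is the margin, which gives both claims of the theorem.

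The main obstacle is Step 2. It requires two things. First, the information inequality for the scale mixture must be justified, since the mixture information is at most the Gaussian information and we need a CRB stated in terms of the Gaussian information; the data-processing/conditioning argument handles this. Second, the constrained-CRB projection must be computed exactly. I would carry out the projection explicitly with $\mathbf P=\mathbf I-\frac{\mathbf C\mathbf u\mathbf u^\top}{\mathbf u^\top\mathbf C\mathbf u}$, where $\mathbf C$ is the Gaussian covariance above and $\mathbf u=\operatorname{vec}(\mathbf I)$. Then I would check that $\operatorname{tr}(\mathbf P\mathbf C)$ matches the stated correction: $\mathbf C\mathbf u=\frac2n\operatorname{vec}(\pmb\Sigma^2)$, and $\mathbf u^\top\mathbf C\mathbf u=\frac2n\operatorname{tr}(\pmb\Sigma^2)$.
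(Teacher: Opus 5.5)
Your proposal is correct and follows the paper's proof essentially step for step. It uses the exact SSCM risk from Lemma~\ref{lem-bias-variance} and Lemma~\ref{lem-errors}(a), and bounds the mixture information by the Gaussian information; the paper gets this bound from Louis's missing-information principle, which gives the same inequality as your $(\mathbf G,(W_i))$ data-processing argument. It then applies the Stoica--Ng constrained CRB, whose rank-one correction is exactly your $\frac{2}{n}\operatorname{tr}(\pmb\Sigma^4)/\operatorname{tr}(\pmb\Sigma^2)$ term, and subtracts to read off the margin.

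Two small slips need fixing. The opening of Step~2 says the data carry \emph{at least} as much information as $\mathbf G$; you mean \emph{at most}, as you correctly say later. Also, the ``conditioning on $(W_i)$'' route should be phrased as applying the CRB in the augmented model for $(\mathbf X,(W_i))$, because an estimator that is unbiased marginally need not be conditionally unbiased given the $W_i$.
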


The proof of Theorem~\ref{thm-frob-lss}, provided in the
\hyperref[sec-proof-appendix]{Appendix}, uses a Cramér--Rao bound under
the trace constraint. This bound involves the Fisher information matrix,
defined as the expected outer product of the gradient of the sample
log-likelihood with respect to the shape parameters. Intuitively,
Theorem~\ref{thm-frob-lss} shows that the SSCM's variance advantage can
compensate for the squared bias it incurs when \(\pmb\Sigma\) is not
equal to \(\mathbf I_p\) (where the SSCM is unbiased). Although the
theorem's condition involves unknown population quantities, under
Assumption~\ref{exr-hd-regime} its left-hand side is
\(\mathcal O(p^{-1})\), while its right-hand side is \(\mathcal O(1)\).
Practically speaking, this condition is satisfied fairly rapidly in
moderate dimensional problems. An immediate consequence is that the SSCM
asymptotically dominates every unbiased estimator irrespective of the
true shape matrix.

\begin{corollary}[Limiting Frobenius Risk Dominance of SSCM Over
Unbiased Estimators For
GSM]\protect\hypertarget{cor-frob-lss}{}\label{cor-frob-lss}

Consider a sample generated by (\ref{eq-model}), and suppose Assumptions
\ref{exr-hd-regime} and \ref{exr-gsm} hold. Let \(\hat{\pmb\Sigma}\) be
a symmetric matrix-valued estimator of \(\pmb\Sigma\) that is unbiased
on \(\Phi\). Further, suppose that the Fisher information matrix for the
shape matrix exists. Then \(\hat{\pmb\Sigma}_{\text{SSCM}}\)
asymptotically dominates every unbiased estimator \(\hat{\pmb\Sigma}\).

\end{corollary}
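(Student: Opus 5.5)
The plan is to derive the corollary from Theorem~\ref{thm-frob-lss}, using its quantitative form: the SSCM's risk advantage over any unbiased estimator is at least the margin
\[
M_n := \frac{1}{n}\operatorname{tr}(\pmb\Gamma_{\mathrm{SSCM}}^2)+\frac{1}{n}\operatorname{tr}(\pmb\Sigma^2)-\frac{2}{n}\frac{\operatorname{tr}(\pmb\Sigma^4)}{\operatorname{tr}(\pmb\Sigma^2)}-\|\pmb\Gamma_{\mathrm{SSCM}}-\pmb\Sigma\|_F^2 .
\]
It then suffices to show $\liminf_{n,p\to\infty} M_n>0$. Once $M_n>0$ for all large $n$, the hypothesis of the theorem holds along the tail of the sequence. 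The theorem then gives $\operatorname{Risk}(\hat{\pmb\Sigma},\pmb\Sigma)-\operatorname{Risk}(\hat{\pmb\Sigma}_{\mathrm{SSCM}},\pmb\Sigma)\ge M_n$, uniformly over all unbiased $\hat{\pmb\Sigma}$. Taking $\liminf$ yields asymptotic dominance in the sense of Definition~\ref{def-asymptotic-dominance}.

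I would compute the limit of each term under Assumption~\ref{exr-hd-regime}.

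\textbf{First term.} From the proof of Lemma~\ref{lem-errors}(a), $\frac1n\operatorname{tr}(\pmb\Gamma_{\mathrm{SSCM}}^2)\to c\beta_2$. This uses $\|\pmb\Gamma_{\mathrm{SSCM}}-\mathbf T\|_{\mathrm{op}}=o(p^{-1})$ and $\mathbf T=\pmb\Sigma+\mathcal O(p^{-1})$ in operator norm.

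\textbf{Second term.} We have $\frac1n\operatorname{tr}(\pmb\Sigma^2)=c_p\beta_{2,p}\to c\beta_2$. Here $\beta_{k,p}\to\beta_k$ follows from weak convergence of $H_p$ together with the uniform bound $\lambda_+$ on the support.

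\textbf{Third term.} Write $\frac{2}{n}\operatorname{tr}(\pmb\Sigma^4)/\operatorname{tr}(\pmb\Sigma^2)=\frac{2}{n}\beta_{4,p}/\beta_{2,p}$. Since $\beta_{2,p}\ge\lambda_-^2>0$ and $\beta_{4,p}\le\lambda_+^4$, this term is $\mathcal O(1/n)\to 0$.

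\textbf{Bias term.} By Proposition~\ref{prp-estimator-control}, $\|\pmb\Gamma_{\mathrm{SSCM}}-\pmb\Sigma\|_F^2=\mathcal O(1/p)\to0$.

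Combining the four limits gives $M_n\to 2c\beta_2$. Since $c>0$ and $\beta_2\ge\beta_1^2=1$, this limit is strictly positive. The bound $\beta_2\ge1$ follows from the trace-$p$ normalization, which gives $\beta_{1,p}=1$, together with Jensen's inequality.

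The main point needing care is that the lower bound in Theorem~\ref{thm-frob-lss} must hold for every unbiased estimator simultaneously, so the $\liminf$ is uniform over the class. It must also apply at each finite $n$ where the hypothesis is met, so the finitely many early indices with $M_n\le 0$ are harmless. I would also check that the Fisher-information hypothesis of the corollary matches the theorem's hypothesis at every $n$. Beyond that, the only genuinely asymptotic inputs are the operator-norm approximation $\pmb\Gamma_{\mathrm{SSCM}}\approx\mathbf T$ from [6] and Proposition~\ref{prp-estimator-control}, both available as stated earlier in the paper.
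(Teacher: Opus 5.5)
Your proposal is correct and follows the paper's route. You take the quantitative margin from Theorem~\ref{thm-frob-lss}, show that both $\frac1n\operatorname{tr}(\cdot)$ terms tend to $c\beta_2$ while the $\operatorname{tr}(\pmb\Sigma^4)/\operatorname{tr}(\pmb\Sigma^2)$ term and the squared bias vanish (bounded spectrum and Proposition~\ref{prp-estimator-control}), and conclude $\liminf\ge 2c\beta_2>0$; your tail-of-sequence caveat is harmless but unnecessary, since subtracting the exact SSCM risk from the Cram\'er--Rao bound gives the margin for every $n$ regardless of its sign.
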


\begin{proof}
This follows from Theorem~\ref{thm-frob-lss}; the conclusion of the
theorem gives the following lower bound on the risk advantage:

\[
\begin{aligned}
\mathbb E\|\hat{\pmb\Sigma}  &- \pmb\Sigma\|_F^2 - \mathbb E\|\hat{\pmb\Sigma}_{\text{SSCM}} - \pmb\Sigma\|_F^2 \\
&\geq \frac{1}{n}\text{tr}(\pmb\Sigma^2) + \frac{1}{n}\text{tr}(\pmb\Gamma^2_\text{SSCM}) -
\frac{2}{n}
\frac{\operatorname{tr}(\pmb\Sigma^4)}
{\operatorname{tr}(\pmb\Sigma^2)} \\
& \qquad - \|\pmb\Gamma_\text{SSCM}-\pmb\Sigma\|_F^2.
\end{aligned}
\] By definition, \(p^{-1}\text{tr}(\pmb\Sigma^2)\rightarrow \beta_2\)
and
\(p^{-1}\text{tr}(\pmb\Gamma_\text{SSCM}^2) = p^{-1}\text{tr}(\pmb\Sigma^2) + o(1)\rightarrow \beta_2\),
while the remaining terms are \(o(1)\) by the bounded spectrum
assumption and Proposition~\ref{prp-estimator-control}. Therefore, the
SSCM risk advantage is bounded by \[
\liminf_{n,p \rightarrow \infty}\bigg[\mathbb E\|\hat{\pmb\Sigma}  - \pmb\Sigma\|_F^2 - \mathbb E\|\hat{\pmb\Sigma}_{\text{SSCM}} - \pmb\Sigma\|_F^2\bigg] \geq 2c\beta_2 > 0.
\]

Hence, in the high-dimensional limit, the SSCM has strictly smaller
Frobenius risk than every unbiased estimator \(\hat{\pmb\Sigma}\). Note
that the limit here is a \(\liminf\), since no convergence assumption is
made on the risk of an arbitrary unbiased estimator.
\end{proof}

The classical appeal of MLEs stems in large part from their unbiasedness
and efficiency under standard fixed-dimensional asymptotics. In the
high-dimensional regime considered here, the SSCM's squared bias
likewise vanishes, while its regularization delivers a persistent
variance advantage over every unbiased estimator. Compared with MLEs
based on a fully specified likelihood function for \(\mathbf X_i\), the
SSCM also avoids the associated risk of misspecification of the mixing
law.

Next, we show that this SSCM outperformance carries over to Tyler's
M-estimator with even looser model assumptions.

\begin{theorem}[High-Dimensional Frobenius Risk Dominance of SSCM Over
Tyler's
M-Estimator]\protect\hypertarget{thm-tme-dominance}{}\label{thm-tme-dominance}

Consider a sample generated by (\ref{eq-model}), and suppose
Assumption~\ref{exr-hd-regime} holds, with \(c< 1\). Then
\(\hat{\pmb\Sigma}_{\text{SSCM}}\) asymptotically dominates
\(\hat{\pmb\Sigma}_{\text{TME}}\).

\begin{proof}
\[
\begin{aligned}
\mathbb E\|\hat{\pmb\Sigma}_{\text{TME}} & - \pmb\Sigma\|_F^2 - \mathbb E\|\hat{\pmb\Sigma}_{\text{SSCM}} - \pmb\Sigma\|_F^2   \\
&=pc_p +  p\mathbb E[L^{\hat{\pmb\Sigma}_\text{TME}}_{\pmb \Gamma_{\text{TME}}}(x^2)] + \|\pmb\Gamma_{\text{TME}} - \pmb\Sigma\|_F^2 \\
&\quad - \bigg( pc_p + p\mathbb E[L^{\hat{\pmb\Sigma}_\text{SSCM}}_{\pmb \Gamma_{\text{SSCM}}}(x^2)] + \|\pmb\Gamma_{\text{SSCM}} - \pmb\Sigma\|_F^2\bigg) \\
&= p\mathbb E[L^{\hat{\pmb\Sigma}_\text{TME}}_{\pmb \Gamma_{\text{TME}}}(x^2)] - p\mathbb E[L^{\hat{\pmb\Sigma}_\text{SSCM}}_{\pmb \Gamma_{\text{SSCM}}}(x^2)] + \mathcal O(1/p)\\
&\underset{n,p \rightarrow \infty}{\longrightarrow} c\beta_2+\frac{2c}{(1-c)} + c\beta_2 \\
&= 2c\beta_2+\frac{2c}{(1-c)}>0.
\end{aligned}
\]

In the above, the risk as a function of LSS means is due to
Lemma~\ref{lem-bias-variance}. The Frobenius bias for both SSCM and TME
is vanishing as a result of Proposition~\ref{prp-estimator-control}, and
the limiting values of the centered LSS means are provided in
Lemma~\ref{lem-errors}.
\end{proof}

\end{theorem}

Theorem~\ref{thm-tme-dominance} applies to shape matrix estimation in
the more general model of (\ref{eq-model}), not merely the special case
of Gaussian scale mixtures required by Assumption~\ref{exr-gsm}.

\section{SIMULATION RESULTS}\label{simulation-results}

In this section, we provide numerical simulation results that
demonstrate the accuracy of the estimation errors predicted by
Lemma~\ref{lem-bias-variance}, using the limiting values of the
naturally-centered means of LSS provided in Lemma~\ref{lem-errors}, even
when the dimension is only moderately large. In doing so, we
intentionally leave out the finite-\(p\) bias-correction from the SSCM,
emphasizing its small penalty even in problems far from the asymptotic
regime.

Because the SSCM and TME are both distribution-free in the sense that
each estimator is invariant to the choice of the mixing variable \(W_i\)
in (\ref{eq-model}), we set \(W_i=1\) for \(i=1,\dots, n\) and simulate
from a multivariate Gaussian with mean zero and a trace-\(p\) covariance
matrix parameterized as follows: the dimension \(p\), a spike fraction
\(\epsilon\), and a spike magnitude \(\rho>1\). For each scenario, we
form a diagonal covariance matrix with \(\lceil \epsilon p \rceil\)
eigenvalues equal to \(\rho\) and the remaining eigenvalues equal to 1,
followed by normalization to enforce \(\text{tr}(\pmb\Sigma)=p\). Within
each scenario, every estimator is computed from the same 150 simulated
samples, so comparisons between estimators are paired: risk differences
are averaged replicate by replicate, and their Monte Carlo standard
errors account for the sampling variation the estimators share. Standard
errors for simulated risk are given in the
\hyperref[sec-supplement]{Supplement}.

In addition to the SSCM and TME, we also benchmark the sample covariance
matrix on this Gaussian data, which can be thought of as simulating the
unobservable latent Gaussian vector in (\ref{eq-model}). This is not a
perfectly matched benchmark for shape estimation, as the sample
covariance must also estimate an overall scale while TME and the SSCM
are trace-\(p\) by construction. Nevertheless, we use the unnormalized
sample covariance as a benchmark because it is the canonical covariance
estimator in the Gaussian setting, and provides a familiar reference
point for covariance estimation. Additionally, because it exploits the
Gaussian structure of the latent variables in (\ref{eq-model}), it can
be thought of as an ``oracle'' benchmark. Although the additional scale
estimation problem may appear to give the sample covariance a
disadvantage, this distinction becomes asymptotically negligible in the
high-dimensional regime considered here. We scale Frobenius errors by
\(1/p\) to make cleaner comparisons between scenarios of different
(finite) dimensions.

Fig.~\ref{fig-shape-sims} shows the limiting theoretical and average
simulated Frobenius risk over 150 draws as \(c_p=p/n\) varies, while
Fig.~\ref{fig-shape-sims-vs-p} depicts the same general simulation setup
but with \(c_p=0.5\) fixed and varying the dimension \(p\). These plots
show that even at a relatively low \(n,p\), simulated errors coincide
with their high-dimensional limiting values remarkably well. The
separation between estimators is far larger than simulation variability:
at every plotted point in Fig.~\ref{fig-shape-sims} and
Fig.~\ref{fig-shape-sims-vs-p}, the paired difference in risk between
the SSCM and each benchmark exceeds 12 Monte Carlo standard errors. We
provide a tabular summary of additional simulation scenarios in the
online \hyperref[sec-supplement]{Supplement} to this paper.

\begin{figure}[tbp]

\centering{

\includegraphics{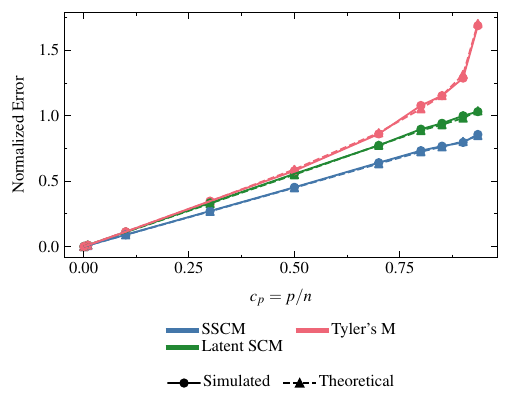}

}

\caption{\label{fig-shape-sims}Simulated vs.~high-dimensional
theoretical normalized error,
\(p^{-1}\mathbb E|\hat{\pmb\Sigma}-\pmb\Sigma|_F^2\), for \(p=50\),
\(\rho=10{,}000\), \(\epsilon=0.2\), and various values of \(c_p\).
Simulated values average 150 replicates per configuration. Latent SCM
denotes the latent Gaussian oracle sample covariance matrix used as a
benchmark.}

\end{figure}%

\begin{figure}[tbp]

\centering{

\includegraphics{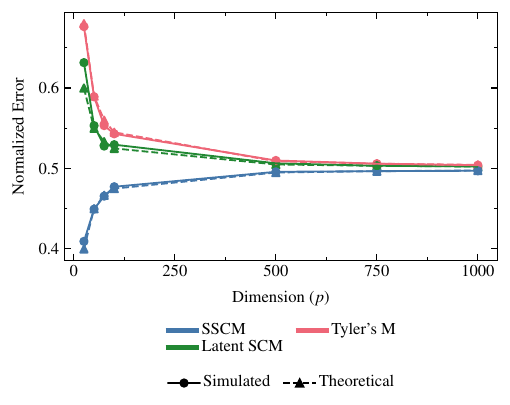}

}

\caption{\label{fig-shape-sims-vs-p}Simulated vs.~theoretical normalized
error, \(p^{-1}\mathbb E|\hat{\pmb\Sigma}-\pmb\Sigma|_F^2\), for \(c_p\)
= 0.5, \(\rho\) = 10,000, \(\epsilon\) = 0.2, and various values of
\(p\). The plots above show the average risk over 150 simulations for
each configuration.}

\end{figure}%

As one would expect from the fact that the sample covariance matrix,
SSCM, and TME share a limiting spectral distribution,
Fig.~\ref{fig-shape-sims-vs-p} demonstrates that the normalized risk of
shape matrix estimates converges to the same value as
\(n, p \rightarrow \infty\). Importantly, however, this figure clearly
shows that as a consequence of its implicit regularization, the SSCM
converges from below. Likewise, the relative benefits of SSCM's
regularization are the greatest in moderate dimensions, where the LSS
error term is not yet dwarfed by the total risk of the estimator (i.e.,
the other terms in (\ref{eq-risk-decomp})). For example, at
\((p,n,\epsilon, \rho)=(25, 250, 0.2, 100)\), the SSCM provides a 29.1\%
(SE 1.6\%) and 30.2\% (SE 1.2\%) improvement over the latent sample
covariance and TME respectively, with an order-of-magnitude runtime
reduction over TME.

\begin{figure}[tbp]

\centering{

\includegraphics{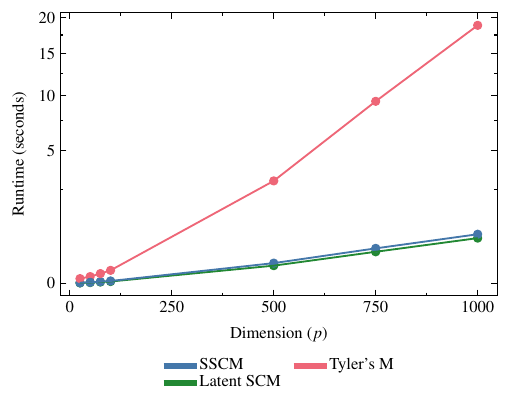}

}

\caption{\label{fig-shape-runtime}Estimator runtime comparisons for
\(\rho\) = 10,000, \(\epsilon\) = 0.2 at various dimensions, averaged
over values of \(c_p\).}

\end{figure}%

Conversely, when the dimension is large enough that the differences are
not practically significant, the computational benefits of the SSCM
versus more complex algorithms such as TME are material. Example
computational costs for each estimator in problems of increasing
dimensionality are shown in Fig.~\ref{fig-shape-runtime}.

\section{CONCLUSION}\label{conclusion}

Using the criterion in Definition~\ref{def-asymptotic-dominance}, we
show that the SSCM asymptotically dominates every unbiased shape
estimator for Gaussian scale mixtures, as well as Tyler's M-estimator
under the broader model in (\ref{eq-model}). These improvements in
Frobenius risk arise from implicit regularization of the SSCM in the
high-dimensional regime: its variance reduction persists while the
squared bias vanishes as dimension grows.

Simulation studies show that the theoretical limiting risk is remarkably
accurate even when the ambient dimension of the data is only moderate.
Moreover, the comparatively cheap computational cost of the SSCM in high
dimensions makes it an appealing choice even in situations where the
theoretical performance difference is insignificant in practical terms.

While Corollary~\ref{cor-frob-lss} shows that the Spatial Sign
Covariance Matrix dominates all unbiased estimators in the
high-dimensional limit, we must stress that this does not imply
dominance over all \emph{asymptotically} unbiased estimators such as
MLEs, which may be biased in finite samples and can attain better
efficiency than the Cramér-Rao lower bound allows without violating any
theoretical laws. Additionally, unbiasedness under fixed-\(p\)
asymptotics does not guarantee unbiasedness in the
\(n,p \rightarrow \infty\) regime we study. There is currently no
theoretical framework to establish the general dominance of the SSCM
over MLEs in the high-dimensional limit. That said, improving upon MLE
performance for heavy-tailed elliptical vectors was a primary motivation
for this research, and empirical results against the benchmarks
presented in the previous section show that the SSCM can and does
outperform MLEs in the settings studied here. These results suggest that
the SSCM merits serious consideration by practitioners as an alternative
to likelihood-based shape estimators.

\section*{\texorpdfstring{APPENDIX: PROOF OF THEOREM
\ref{thm-frob-lss}}{APPENDIX: PROOF OF THEOREM }}\label{sec-proof-appendix}
\addcontentsline{toc}{section}{APPENDIX: PROOF OF THEOREM
\ref{thm-frob-lss}}

\begin{proof}
The proof proceeds in three steps. First, we show that the latent
Gaussian model yields a Cramér-Rao bound no larger than that of any
corresponding Gaussian scale mixture. We then derive this bound for the
shape matrix and finally compare the resulting risk to that of the SSCM.
Because the CRB bounds the risk of any unbiased estimator, any
conditions that result in SSCM risk lower than this benchmark risk also
imply that SSCM dominates all unbiased estimators of \(\pmb\Sigma\).

Let \(\hat{\pmb\Sigma}\) be a symmetric matrix-valued estimator of
\(\pmb\Sigma\) that is unbiased on \(\Phi\). Recall that under
(\ref{eq-model}), \[
\pmb G_i\sim N_p(\mathbf 0,\pmb\Sigma)\text{ and } \operatorname{tr}(\pmb\Sigma)=p,
\] so the shape matrix of interest is also exactly the covariance matrix
of the latent Gaussian vector, which is independent of \(W_i\) by
Assumption~\ref{exr-gsm}.

To obtain the Gaussian Cramér--Rao bound for the trace-\(p\) shape
matrix, we start with a standard unrestricted covariance
parameterization in the Gaussian model and apply the
equality-constrained Cramér--Rao bound of {[}16{]}. In addition to the
trace-\(p\) parameter space \(\Phi\) defined in the problem formulation,
we introduce the ambient unrestricted parameter space represented in
(nonredundant) half-vectorization coordinates: \[
\Theta
:=
\left\{
\operatorname{vech}(\pmb\Sigma):
\pmb\Sigma\in\mathbb S_{++}^p
\right\}.
\] We write \(\boldsymbol\theta\in\Theta\) for the unrestricted
parameter vector and \(\boldsymbol\phi\in\Phi\) for its
trace-constrained counterpart. It is convenient to represent the
unrestricted parameter in the nonredundant \(\operatorname{vech}\)
coordinates in \(\mathbb R^{p(p+1)/2}\), while expressing the
constrained shape parameter in vec coordinates in \(\mathbb R^{p^2}\);
to bridge these representations, let \(\mathbf D_p\) denote the
duplication matrix, so that

\[
\operatorname{vec}(\pmb\Sigma)
=
\mathbf D_p\boldsymbol\theta.
\]

The trace constraint can be written as \[
a(\boldsymbol\theta)
=
\operatorname{tr}(\pmb\Sigma)-p
=
\operatorname{vec}(\mathbf I_p)^\top
\mathbf D_p\boldsymbol\theta-p
=
0,
\] and its gradient is therefore \[
\nabla_{\boldsymbol\theta}a(\boldsymbol\theta)
=
\mathbf D_p^\top\operatorname{vec}(\mathbf I_p).
\] By {[}16{]}, there exists \(\mathbf U\) with orthonormal columns
spanning the nullspace of the constraint gradient, so that \[
\nabla_{\boldsymbol\theta}a(\boldsymbol\theta)^\top\mathbf U
=
\mathbf 0
\text{ and }
\mathbf U^\top\mathbf U
=
\mathbf I.
\]

Let \(\mathcal I_{\mathbf X}\) and \(\mathcal I_{\mathbf G}\) denote the
Fisher Information Matrices (FIMs) with respect to \(\boldsymbol\theta\)
based on the observed Gaussian scale mixture and latent Gaussian sample,
respectively. Since \(W_i\) itself carries no information about
\(\pmb\Sigma\), while \(\pmb G_i=\pmb X_i/\sqrt{W_i}\) is recovered from
the complete data \((\pmb X_i,W_i)\), the missing-information principle
{[}17{]} gives \[
\mathcal I_{\mathbf G}
=
\mathcal I_{\mathbf X}
+
\mathbb E_{\mathbf X}\!\left[
\operatorname{Var}\!\left(
\nabla_{\boldsymbol\theta}
\log f_{\mathbf X\mid W}
(\mathbf X\mid W,\boldsymbol\theta)
\mid
\mathbf X
\right)
\right]
\succeq
\mathcal I_{\mathbf X}.
\]

By Theorem 1 of {[}16{]}, the equality-constrained CRB matrices for the
vectorized shape parameter are \[
\operatorname{CRB}_{\pmb\phi,\mathbf G}
=
\mathbf D_p\mathbf U
\left(
\mathbf U^\top
\mathcal I_{\mathbf G}
\mathbf U
\right)^{-1}
\mathbf U^\top\mathbf D_p^\top, \text{ and}
\] \[
\operatorname{CRB}_{\pmb\phi,\mathbf X}
=
\mathbf D_p\mathbf U
\left(
\mathbf U^\top
\mathcal I_{\mathbf X}
\mathbf U
\right)^{-1}
\mathbf U^\top\mathbf D_p^\top.
\] This implies that \[
\mathbf U^\top
\mathcal I_{\mathbf G}
\mathbf U
\succeq
\mathbf U^\top
\mathcal I_{\mathbf X}
\mathbf U,
\] where inversion reverses the positive-semidefinite ordering, and
congruence by \(\mathbf D_p\mathbf U\) preserves it. Hence \[
\operatorname{CRB}_{\pmb\phi,\mathbf G}
\preceq
\operatorname{CRB}_{\pmb\phi,\mathbf X}.
\]

Finally, we obtain the relevant Gaussian bound explicitly. For \(n\)
independent Gaussian observations with covariance \(\pmb\Sigma\), the
FIM under the nonredundant parameterization
\(\boldsymbol\theta=\operatorname{vech}(\pmb\Sigma)\) is \[
\mathcal I_{\mathbf G}
=
\frac{n}{2}
\mathbf D_p^\top
(\pmb\Sigma^{-1}\otimes\pmb\Sigma^{-1})
\mathbf D_p,
\] which is nonsingular {[}18{]}. Thus, defining the unrestricted
Gaussian covariance bound by \[
\operatorname{CRB}_{\boldsymbol\theta,\mathbf G}
=
\mathcal I_{\mathbf G}^{-1},
\] Corollary 1 of {[}16{]} and the transformation from
\(\boldsymbol\theta\) to \(\boldsymbol\phi\) give
\begin{equation}\phantomsection\label{eq-crb-constrained}{
\begin{aligned}
\operatorname{CRB}_{\pmb\phi,\mathbf G}
&=
\mathbf D_p\Bigg(
\operatorname{CRB}_{\boldsymbol\theta,\mathbf G}
-
\operatorname{CRB}_{\boldsymbol\theta,\mathbf G}
\nabla_{\boldsymbol\theta}a(\boldsymbol\theta) \\
&\qquad\times
\Big[
\nabla_{\boldsymbol\theta}a(\boldsymbol\theta)^\top
\operatorname{CRB}_{\boldsymbol\theta,\mathbf G}
\nabla_{\boldsymbol\theta}a(\boldsymbol\theta)
\Big]^{-1} \\
&\qquad\times
\nabla_{\boldsymbol\theta}a(\boldsymbol\theta)^\top
\operatorname{CRB}_{\boldsymbol\theta,\mathbf G}
\Bigg)\mathbf D_p^\top.
\end{aligned}
}\end{equation} The first term is simply the unrestricted CRB matrix
represented in \(\operatorname{vec}\) coordinates: \[
\mathbf D_p
\operatorname{CRB}_{\boldsymbol\theta,\mathbf G}
\mathbf D_p^\top
=
\frac{1}{n}
(\mathbf I_{p^2}+\pmb{\mathcal K})
(\pmb\Sigma\otimes\pmb\Sigma),
\] where \(\pmb{\mathcal K}\) is the commutation matrix {[}18{]}. Using
\(\nabla_{\boldsymbol\theta}a(\boldsymbol\theta)
=
\mathbf D_p^\top\operatorname{vec}(\mathbf I_p)\) then gives \[
\mathbf D_p
\operatorname{CRB}_{\boldsymbol\theta,\mathbf G}
\nabla_{\boldsymbol\theta}a(\boldsymbol\theta)
=
\frac{2}{n}\operatorname{vec}(\pmb\Sigma^2), \text{ and }
\] \[
\nabla_{\boldsymbol\theta}a(\boldsymbol\theta)^\top
\operatorname{CRB}_{\boldsymbol\theta,\mathbf G}
\nabla_{\boldsymbol\theta}a(\boldsymbol\theta)
=
\frac{2}{n}\operatorname{tr}(\pmb\Sigma^2).
\] Plugging these into (\ref{eq-crb-constrained}), we arrive at \[
\operatorname{CRB}_{\pmb\phi,\mathbf G}
=
\frac{1}{n}
(\mathbf I_{p^2}+\pmb{\mathcal K})
(\pmb\Sigma\otimes\pmb\Sigma)
-
\frac{2}{n\operatorname{tr}(\pmb\Sigma^2)}
\operatorname{vec}(\pmb\Sigma^2)
\operatorname{vec}(\pmb\Sigma^2)^\top.
\]

For any unbiased estimator based on Gaussian scale mixture observations,
the Cramér--Rao bound and the ordering above imply \[
\operatorname{Cov}\!\left(
\operatorname{vec}(\hat{\pmb\Sigma})
\right)
\succeq
\operatorname{CRB}_{\pmb\phi,\mathbf X}
\succeq
\operatorname{CRB}_{\pmb\phi,\mathbf G}.
\] Additionally, because \(\hat{\pmb\Sigma}\) is unbiased, the Frobenius
risk equals the total variance of its vectorized entries. Taking traces
therefore gives the risk bound for unbiased estimators as \[
\begin{aligned}
\mathbb E\|\hat{\pmb\Sigma}-\pmb\Sigma\|_F^2
&=
\operatorname{tr}\!\left[
\operatorname{Cov}\!\left(
\operatorname{vec}(\hat{\pmb\Sigma})
\right)
\right] \\
&\geq
\operatorname{tr}\!\left(
\operatorname{CRB}_{\pmb\phi,\mathbf G}
\right) 
\\
&= pc_p
+
\frac{1}{n}\operatorname{tr}(\pmb\Sigma^2)
-
\frac{2}{n}
\frac{\operatorname{tr}(\pmb\Sigma^4)}
{\operatorname{tr}(\pmb\Sigma^2)}.
\end{aligned}
\] Next, let
\(\pmb\Gamma_{\mathrm{SSCM}}=\mathbb E(\hat{\pmb\Sigma}_{\mathrm{SSCM}})\).
By Lemma~\ref{lem-bias-variance} and Lemma~\ref{lem-errors}, the SSCM
has exact Frobenius risk given by \[
\mathbb E\|\hat{\pmb\Sigma}_{\mathrm{SSCM}}-\pmb\Sigma\|_F^2
=
pc_p
-\frac{1}{n}\operatorname{tr}(\pmb\Gamma_{\mathrm{SSCM}}^2)
+\|\pmb\Gamma_{\mathrm{SSCM}}-\pmb\Sigma\|_F^2.
\] Subtracting this expression from the preceding lower bound gives the
quantitative risk advantage stated in the theorem. Therefore, the SSCM
has no greater risk than the Gaussian Cramér--Rao lower bound whenever
\[
\begin{aligned}
&pc_p
-\frac{1}{n}\operatorname{tr}(\pmb\Gamma_{\mathrm{SSCM}}^2)
+\|\pmb\Gamma_{\mathrm{SSCM}}-\pmb\Sigma\|_F^2
\leq \\
&\qquad
pc_p
+
\frac{1}{n}\operatorname{tr}(\pmb\Sigma^2)
-
\frac{2}{n}
\frac{\operatorname{tr}(\pmb\Sigma^4)}
{\operatorname{tr}(\pmb\Sigma^2)}.
\end{aligned}
\] Rearranging yields \[
\begin{aligned}
\|\pmb\Gamma_{\mathrm{SSCM}}-&\pmb\Sigma\|_F^2
\leq \\ 
&\frac{1}{n}\operatorname{tr}(\pmb\Gamma_{\mathrm{SSCM}}^2)
+\frac{1}{n}\operatorname{tr}(\pmb\Sigma^2)
-
\frac{2}{n}
\frac{\operatorname{tr}(\pmb\Sigma^4)}
{\operatorname{tr}(\pmb\Sigma^2)},
\end{aligned}
\] which is exactly the condition stated in Theorem~\ref{thm-frob-lss}.
\end{proof}

\section*{ACKNOWLEDGMENT}\label{acknowledgment}
\addcontentsline{toc}{section}{ACKNOWLEDGMENT}

We thank Marco Avella Medina and John P. Nolan for their insightful
conversations and literature pointers.

AI Disclosure: The authors used AI tools, including Refine, OpenAI's
GPT-6 and Anthropic's Fable 5.1, to assist with formatting, literature
searches, technical review, and polishing prose. AI was not used for
proof generation. The authors retain full responsibility for the content
and results.

\section*{REFERENCES}\label{references}
\addcontentsline{toc}{section}{REFERENCES}

\phantomsection\label{refs}
\begin{CSLReferences}{0}{0}
\bibitem[\citeproctext]{ref-coppensSampleComplexityDataDriven2020}
\CSLLeftMargin{{[}1{]} }%
\CSLRightInline{P. Coppens and P. Patrinos, {``Sample complexity of
data-driven stochastic {LQR} with multiplicative uncertainty,''} in
\emph{2020 59th {IEEE Conference} on {Decision} and {Control} ({CDC})},
Jeju, Korea (South): IEEE, Dec. 2020, pp. 6210--6215. doi:
\href{https://doi.org/10.1109/CDC42340.2020.9303905}{10.1109/CDC42340.2020.9303905}.}

\bibitem[\citeproctext]{ref-stamouliAdaptiveStochasticMPC2022a}
\CSLLeftMargin{{[}2{]} }%
\CSLRightInline{C. Stamouli, A. Tsiamis, M. Morari, and G. J. Pappas,
{``Adaptive stochastic {MPC} under unknown noise distribution,''} in
\emph{Proceedings of {The} 4th {Annual Learning} for {Dynamics} and
{Control Conference}}, Apr. 2022, pp. 596--607. doi:
\href{https://doi.org/10.48550/arXiv.2204.01107}{10.48550/arXiv.2204.01107}.}

\bibitem[\citeproctext]{ref-huangNovelOutlierRobustKalman2021}
\CSLLeftMargin{{[}3{]} }%
\CSLRightInline{Y. Huang, Y. Zhang, Y. Zhao, P. Shi, and J. A. Chambers,
{``A novel outlier-robust {Kalman} filtering framework based on
statistical similarity measure,''} \emph{IEEE Trans. Autom. Control},
vol. 66, no. 6, pp. 2677--2692, Jun. 2021, doi:
\href{https://doi.org/10.1109/TAC.2020.3011443}{10.1109/TAC.2020.3011443}.}

\bibitem[\citeproctext]{ref-pmlr-v139-hayase21a}
\CSLLeftMargin{{[}4{]} }%
\CSLRightInline{J. Hayase, W. Kong, R. Somani, and S. Oh, {``{SPECTRE}:
Defending against backdoor attacks using robust statistics,''} in
\emph{Proceedings of the 38th international conference on machine
learning}, M. Meila and T. Zhang, Eds., in Proceedings of machine
learning research, vol. 139. PMLR, Jul. 2021, pp. 4129--4139.}

\bibitem[\citeproctext]{ref-spallFirstorderDataSensitivity1990}
\CSLLeftMargin{{[}5{]} }%
\CSLRightInline{J. C. Spall and D. C. Chin, {``First-order data
sensitivity measures with applications to a multivariate
signal-plus-noise problem,''} \emph{Computational Statistics \& Data
Analysis}, vol. 9, no. 3, pp. 297--307, May 1990, doi:
\href{https://doi.org/10.1016/0167-9473(90)90111-T}{10.1016/0167-9473(90)90111-T}.}

\bibitem[\citeproctext]{ref-li_etal22}
\CSLLeftMargin{{[}6{]} }%
\CSLRightInline{W. Li, Q. Wang, J. Yao, and W. Zhou, {``On eigenvalues
of a high-dimensional spatial-sign covariance matrix,''}
\emph{Bernoulli}, vol. 28, no. 1, pp. 606--637, Feb. 2022, doi:
\href{https://doi.org/10.3150/21-BEJ1360}{10.3150/21-BEJ1360}.}

\bibitem[\citeproctext]{ref-ollilaComplexEllipticallySymmetric2012}
\CSLLeftMargin{{[}7{]} }%
\CSLRightInline{E. Ollila, D. E. Tyler, V. Koivunen, and H. V. Poor,
{``Complex elliptically symmetric distributions: Survey, new results and
applications,''} \emph{IEEE Trans. Signal Process.}, vol. 60, no. 11,
pp. 5597--5625, Nov. 2012, doi:
\href{https://doi.org/10.1109/TSP.2012.2212433}{10.1109/TSP.2012.2212433}.}

\bibitem[\citeproctext]{ref-tyler87}
\CSLLeftMargin{{[}8{]} }%
\CSLRightInline{D. E. Tyler, {``A distribution-free {M-estimator} of
multivariate scatter,''} \emph{Ann. Statist.}, vol. 15, no. 1, pp.
234--251, Mar. 1987, doi:
\href{https://doi.org/10.1214/aos/1176350263}{10.1214/aos/1176350263}.}

\bibitem[\citeproctext]{ref-wiesel_zhang15}
\CSLLeftMargin{{[}9{]} }%
\CSLRightInline{A. Wiesel and T. Zhang, {``Structured robust covariance
estimation,''} \emph{Found. Trends Signal Process.}, vol. 8, no. 3, pp.
127--216, 2015, doi:
\href{https://doi.org/10.1561/2000000053}{10.1561/2000000053}.}

\bibitem[\citeproctext]{ref-danon_garber22}
\CSLLeftMargin{{[}10{]} }%
\CSLRightInline{L. Danon and D. Garber, {``Frank-{Wolfe-based}
algorithms for approximating {Tyler}'s {M-estimator},''} in
\emph{Advances in {Neural Information Processing Systems}, vol. 35},
2022, pp. 3637--3648. doi:
\href{https://doi.org/10.48550/arXiv.2206.09370}{10.48550/arXiv.2206.09370}.}

\bibitem[\citeproctext]{ref-locantore_etal99}
\CSLLeftMargin{{[}11{]} }%
\CSLRightInline{N. Locantore \emph{et al.}, {``Robust principal
component analysis for functional data,''} \emph{Test}, vol. 8, no. 1,
pp. 1--73, Jun. 1999, doi:
\href{https://doi.org/10.1007/BF02595862}{10.1007/BF02595862}.}

\bibitem[\citeproctext]{ref-magyar_tyler13}
\CSLLeftMargin{{[}12{]} }%
\CSLRightInline{A. F. Magyar and D. E. Tyler, {``The asymptotic
inadmissibility of the spatial sign covariance matrix for elliptically
symmetric distributions,''} \emph{Biometrika}, vol. 101, no. 3, pp.
673--688, Sep. 2014, doi:
\href{https://doi.org/10.1093/biomet/asu020}{10.1093/biomet/asu020}.}

\bibitem[\citeproctext]{ref-li_etal25}
\CSLLeftMargin{{[}13{]} }%
\CSLRightInline{R. Li, W. Li, and Q. Wang, {``Tests for
large-dimensional shape matrices via {Tyler}'s {M} estimators,''}
\emph{J. Amer. Statist. Assoc.}, vol. 120, no. 549, pp. 472--485, Jan.
2025, doi:
\href{https://doi.org/10.1080/01621459.2024.2350573}{10.1080/01621459.2024.2350573}.}

\bibitem[\citeproctext]{ref-marcenko_pastur67}
\CSLLeftMargin{{[}14{]} }%
\CSLRightInline{V. A. Marčenko and L. A. Pastur, {``Distribution of
eigenvalues for some sets of random matrices,''} \emph{Math. USSR Sb.},
vol. 1, no. 4, pp. 457--483, Apr. 1967, doi:
\href{https://doi.org/10.1070/SM1967v001n04ABEH001994}{10.1070/SM1967v001n04ABEH001994}.}

\bibitem[\citeproctext]{ref-bai_silverstein04}
\CSLLeftMargin{{[}15{]} }%
\CSLRightInline{Z. D. Bai and J. W. Silverstein, {``{CLT} for linear
spectral statistics of large-dimensional sample covariance matrices,''}
\emph{Ann. Probab.}, vol. 32, no. 1A, pp. 553--605, Jan. 2004, doi:
\href{https://doi.org/10.1214/aop/1078415845}{10.1214/aop/1078415845}.}

\bibitem[\citeproctext]{ref-stoica_boonchongng98}
\CSLLeftMargin{{[}16{]} }%
\CSLRightInline{P. Stoica and B. C. Ng, {``On the {Cramer-Rao} bound
under parametric constraints,''} \emph{IEEE Signal Process. Lett.}, vol.
5, no. 7, pp. 177--179, Jul. 1998, doi:
\href{https://doi.org/10.1109/97.700921}{10.1109/97.700921}.}

\bibitem[\citeproctext]{ref-louis82}
\CSLLeftMargin{{[}17{]} }%
\CSLRightInline{T. A. Louis, {``Finding the observed information matrix
when using the {EM} algorithm,''} \emph{J. R. Stat. Soc. Ser. B}, vol.
44, no. 2, pp. 226--233, Jan. 1982, doi:
\href{https://doi.org/10.1111/j.2517-6161.1982.tb01203.x}{10.1111/j.2517-6161.1982.tb01203.x}.}

\bibitem[\citeproctext]{ref-barfoot20}
\CSLLeftMargin{{[}18{]} }%
\CSLRightInline{T. D. Barfoot, {``Multivariate {Gaussian} variational
inference by natural gradient descent.''} arXiv, Oct. 2020. doi:
\href{https://doi.org/10.48550/arXiv.2001.10025}{10.48550/arXiv.2001.10025}.}

\bibitem[\citeproctext]{ref-meckes19}
\CSLLeftMargin{{[}19{]} }%
\CSLRightInline{E. S. Meckes, \emph{The {Random Matrix Theory} of the
{Classical Compact Groups}}, 1st ed. Cambridge University Press, 2019.
doi:
\href{https://doi.org/10.1017/9781108303453}{10.1017/9781108303453}.}

\bibitem[\citeproctext]{ref-potters_bouchaud21}
\CSLLeftMargin{{[}20{]} }%
\CSLRightInline{M. Potters and J.-P. Bouchaud, \emph{A {First Course} in
{Random Matrix Theory}: {For Physicists}, {Engineers} and {Data
Scientists}}. Cambridge, U.K.: Cambridge University Press, 2021. doi:
\href{https://doi.org/10.1017/9781108768900}{10.1017/9781108768900}.}

\end{CSLReferences}

%
%
\onecolumn
\setcounter{section}{0}
\renewcommand{\thesection}{S\arabic{section}}
\setcounter{equation}{0}
\renewcommand{\theequation}{S\arabic{equation}}

\begin{center}
\vskip0.25in{\LARGE\bfseries Supplement\par}\vskip1.0em
\phantomsection\label{sec-supplement}
\end{center}

This online supplement collects the proofs of
Proposition~\ref{prp-tme-l2} and
Proposition~\ref{prp-estimator-control}, as well as tables of simulation
results.

\section{\texorpdfstring{PROOF OF
Proposition~\ref{prp-tme-l2}}{PROOF OF Proposition~}}\label{proof-of-prp-tme-l2}

The machinery needed to prove the uniform integrability of \(\{\Pi_n\}\)
is provided in the Supplement to {[}13{]}; where possible, we introduce
new objects here borrowing their notation. Throughout, the standing
conditions of (\ref{eq-model}) and Assumption~\ref{exr-hd-regime} hold
with \(c_p, c\in(0,1)\). Finally, recall that
\(\operatorname{tr}(\hat{\pmb\Sigma}_{\mathrm{TME}}^2)\) and
\(\operatorname{tr}(\hat{\pmb\Sigma}_{\mathrm{SSCM}}^2)\) are understood
to belong to sequences indexed by \(n\) and \(p\), with indices
suppressed for readability.

We begin with the construction used to prove Lemma S1.2 in {[}13{]}.
Whenever all eigenvalues of both \(\hat{\pmb\Sigma}_{\mathrm{TME}}\) and
\(\hat{\pmb\Sigma}_{\mathrm{SSCM}}\) fall inside a fixed contour
\(\mathcal C\), Cauchy's integral formula gives

\begin{equation}\phantomsection\label{eq-cauchy-integral}{
\Pi_n = \operatorname{tr}(\hat{\pmb\Sigma}_{\mathrm{TME}}^2)-\operatorname{tr}(\hat{\pmb\Sigma}_{\mathrm{SSCM}}^2)= -\frac{1}{2\pi i} \oint_{\mathcal C}z^2 M_n(z)dz,
}\end{equation} where
\(M_n(z) = \operatorname{tr}[(\hat{\pmb\Sigma}_{\mathrm{TME}}-z\mathbf I_p)^{-1}] - \operatorname{tr}[(\hat{\pmb\Sigma}_{\mathrm{SSCM}}-z\mathbf I_p)^{-1}]\)
is the difference in the scaled Stieltjes transforms of TME and the
SSCM. Here \(z\in\mathbb C\), and we choose \(\mathcal C\) to be the
counterclockwise boundary of a rectangle with vertical sides at
\(x_l<x_r\) and horizontal sides at \(\operatorname{Im}z=\pm v_0\),
where \(v_0>0\).

The representation in (\ref{eq-cauchy-integral}) cannot be used without
modification, unfortunately. As \(n,p\) grow, realized eigenvalues of
both \(\hat{\pmb\Sigma}_{\mathrm{TME}}\) and
\(\hat{\pmb\Sigma}_{\mathrm{SSCM}}\) may fall outside any chosen fixed
contour or approach its real-axis crossings. To control these
situations, we use the regularized transform \(\widehat M_n\) of
{[}13{]}. The authors' results additionally require bounded standardized
coordinates, described below. We introduce one remainder, \(r_n^{(A)}\),
for the contour approximation and another, \(r_n^{(B)}\), for the
coordinate truncation, and show that both modifications to \(\Pi_n\) in
(\ref{eq-cauchy-integral}) vanish in \(L^2\).

Throughout, we assume the bounded-coordinate samples are used in every
object except \(\Pi_n\). Likewise, let \(K\) denote a positive constant,
independent of \(n\) and \(p\) (and of \(z\in\mathcal C\)), whose value
may change from line to line. Define \(\Pi_n^B\) as the version of
\(\Pi_n\) computed from the bounded-coordinate samples as described in
the section below, with the corresponding difference
\(r_n^{(B)} = \Pi_n - {\Pi}_n^B\).

Like {[}15{]} before them, {[}13{]} adopt a regularization of
\(M_n(z)\).

Let \(\delta_n=\varepsilon_n/n\), where \(\varepsilon_n\downarrow0\) and
\(\varepsilon_n>n^{-a}\) for some \(a\in(0,1)\), and define \[
\mathcal C_n=\{z\in\mathcal C:|\operatorname{Im}z|\ge\delta_n\}.
\] For \(z=x+iv\in\mathcal C\), we define the regularized transform
\(\widehat M_n\) as \[
\widehat M_n(x+iv)=
\begin{cases}
M_n(x+iv),&x+iv\in\mathcal C_n,\\
M_n(x+i\delta_n),&x\in\{x_l,x_r\},\quad 0\le v<\delta_n,\\
M_n(x-i\delta_n),&x\in\{x_l,x_r\},\quad -\delta_n<v<0.
\end{cases}
\] Thus \(\widehat M_n\) agrees with \(M_n\) on \(\mathcal C_n\) and
holds its value constant when \(z\) approaches the real axis.

Next, define

\[
\begin{aligned}
\widehat \Pi_n^B := -\frac{1}{2\pi i} \oint_{\mathcal C}z^2 \widehat M_n(z)dz, && \text{so that } r_n^{(A)} = \Pi_n^B - \widehat \Pi_n^B.
\end{aligned}
\] Then we have

\[ 
\begin{aligned}
\Pi_n & = \widehat{\Pi}_n^B + r_n^{(A)} + r_n^{(B)}.
\end{aligned}
\] To establish uniform integrability, we can show that
\(\sup_n \mathbb E|\Pi_n|^2 <\infty\). Observe that by Cauchy-Schwarz,

\[
|\Pi_n|^2 = |\widehat{\Pi}_n^B + r_n^{(A)} + r_n^{(B)}|^2  \leq 3(|\widehat{\Pi}_n^B|^2 + |r_n^{(A)}|^2 + |r_n^{(B)}|^2 ),
\]

thus, taking expectations in this inequality shows that a uniform
\(L^2\) bound for \(\widehat{\Pi}_n^B\) transfers to \(\Pi_n\) if both
remainders vanish in \(L^2\). We first show that
\(\mathbb E|r_n^{(A)}|^2\to0\) and \(\mathbb E|r_n^{(B)}|^2\to0\), then
establish the required uniform bound for \(\widehat{\Pi}_n^B\).

\subsection{\texorpdfstring{Control of Bounded Coordinates:
\(r_n^{(B)}\)}{Control of Bounded Coordinates: r\_n\^{}\{(B)\}}}\label{control-of-bounded-coordinates-r_nb}

To control higher moments in their proofs, {[}13{]} require the
coordinates of the latent random vectors allowed by their model to be
bounded. Under (\ref{eq-model}), the corresponding coordinates are
standard Gaussian and have moments of all orders. To apply their
intermediate bounds as stated, we show that truncating these coordinates
has a negligible effect on \(\Pi_n\) in \(L^2\).

Write \(\mathbf G=\mathbf Z\pmb\Sigma^{1/2}\). We retain each standard
Gaussian coordinate whose magnitude is at most \(n^{1/3}\) and
independently redraw any remaining coordinate from the standard Gaussian
law conditioned on this interval. The resulting bounded coordinates
satisfy Assumption 5 of {[}13{]}. Call the event
\(\mathcal A_n = \{\text{at least one entry of } \mathbf Z \text{ is replaced} \}\).

Standard Gaussian tail bounds plus a union bound over the \(np\) entries
of the matrix \(\mathbf Z\) give \[
P(\mathcal A_n)
\le 2npe^{-n^{2/3}/2}.
\] Since both \({\Pi}_n\) and \({\Pi}_n^B\) are based on positive
semidefinite estimators with trace \(p\), we have
\(|{\Pi}_n|,|{\Pi}_n^B|\le p^2\), and hence \[
\mathbb E |r_n^{(B)} |^2 = \mathbb E|{\Pi}_n-{\Pi}_n^B|^2
\le 4p^4 P(\mathcal A_n)
\le 8np^5e^{-n^{2/3}/2}\rightarrow 0.
\]

\subsection{\texorpdfstring{Control of the Regularization of \(M_n(z)\):
\(r_n^{(A)}\)}{Control of the Regularization of M\_n(z): r\_n\^{}\{(A)\}}}\label{control-of-the-regularization-of-m_nz-r_na}

Next, observe that \(\widehat M_n\) differs from \(M_n\) only on the
four segments
\(\{ x \pm iv: x\in \{x_l, x_r\}, 0 \leq v \leq n^{-1}\varepsilon_n\}\).
Thus the effect of regularization only modifies a segment of total
length \(4\varepsilon_n/n\) where \(\mathcal C\) crosses the real axis.

We begin by considering the event
\(\mathcal E_n = \{\text{all eigenvalues of }\hat{\pmb\Sigma}_{\mathrm{SSCM}}\text{ and }\hat{\pmb\Sigma}_{\mathrm{TME}} \text{ lie in }[\eta_l,\eta_r] \},\)
where \(x_l<\eta_l < \lambda_-(1-\sqrt{c})^2\) and
\(\lambda_+(1+\sqrt{c})^2< \eta_r < x_r\). By Lemmas S8.4 and S1.1,
\(P(\mathcal E_n) = 1-o(n^{-s})\) for every fixed \(s>0\). Call
\(d := \min(\eta_l - x_l, x_r - \eta_r) > 0\). Then because
\(\widehat M_n\), \(M_n\) each have \(2p\) terms, each with magnitude at
most \(1/d\) where they differ, \(|M_n|, |\widehat M_n| \leq 2p/d\), and
thus their difference is at most \(4p/d\).

Then on \(\mathcal E_n\), \[
|{r_n^{(A)}}| = |\Pi_n^B - \widehat \Pi_n^B | = \bigg|\frac{1}{2\pi i} \oint_{\mathcal C} z^2 [M_n(z) - \widehat M_n(z)]dz\bigg| \leq \frac{1}{2 \pi}\max_{\mathcal C}|z|^2\frac{4p}{d}\frac{4\varepsilon_n}{n}= K\frac{p\varepsilon_n}{n},
\] which gives

\[
\mathbb E (|{r_n^{(A)}}|^2\mathbf 1\{\mathcal E_n\}) \leq K^2\bigg(\frac{p\varepsilon_n}{n}\bigg)^2 \rightarrow 0.
\] Next, we evaluate the same for the complement event,
\(\mathcal E_n^\text{C}\). By construction, every evaluation of \(M_n\)
used to define \(\widehat M_n\) is at least \(\varepsilon_n/n\) from the
real axis where the eigenvalues occur, so
\(|\widehat M_n| \leq 2pn/\varepsilon_n\). Then

\[
\bigg|-\frac{1}{2\pi i} \oint_{\mathcal C}z^2 \widehat M_n(z)dz \bigg| = |\widehat \Pi_n^B| \leq \frac{1}{2\pi}\max_{\mathcal C}|z|^2\frac{2pn}{\varepsilon_n}\ell(\mathcal C) = K\frac{pn}{\varepsilon_n},
\] with \(\ell(\mathcal C)\) denoting the (finite) length of the
contour. Using the facts that \(\varepsilon_n > n^{-a}\) and
\(p = \mathcal O(n)\), and reusing the fact that \(|{\Pi}_n^B|\le p^2\),

\[
|{r_n^{(A)}}|= |\Pi_n^B - \widehat \Pi_n^B |\leq |\Pi_n^B| + |\widehat \Pi_n^B| \leq p^2 + K_1\frac{pn}{\varepsilon_n} \leq K_2n^{2+a}.
\] Then \[
\mathbb E (|{r_n^{(A)}}|^2\mathbf 1\{\mathcal E_n^\text{C}\}) \leq n^{4+2a}o(n^{-s}) \rightarrow 0,
\] for \(s > 6\). Finally, we have

\[
\mathbb E (|{r_n^{(A)}}|^2)  = \mathbb E (|{r_n^{(A)}}|^2\mathbf 1\{\mathcal E_n\}) + \mathbb E (|{r_n^{(A)}}|^2\mathbf 1\{\mathcal E_n^\text{C}\})  \rightarrow 0.
\]

Thus, \(\mathbb E |{r_n^{(A)}}|^2 \rightarrow 0\) and
\(\mathbb E |{r_n^{(B)}}|^2 \rightarrow 0\) so it remains to show a
uniform \(L^2\) bound for \(\widehat{\Pi}_n^B\).

\subsection{\texorpdfstring{Uniform \(L^2\) Bound for
\(\widehat{\Pi}_n^B\)}{Uniform L\^{}2 Bound for \textbackslash widehat\{\textbackslash Pi\}\_n\^{}B}}\label{uniform-l2-bound-for-widehatpi_nb}

Observe that since \(\oint_{\mathcal C} z^2 \widehat M_n(z_0)dz=0\) for
any fixed \(z_0 \in \mathcal C_n\), we can represent
\(\widehat{\Pi}_n^B\) as a shifted integral

\[
\begin{aligned}
\widehat{\Pi}_n^B &= -\frac{1}{2\pi i} \oint_{\mathcal C}z^2 \widehat M_n(z)dz \\
&= -\frac{1}{2\pi i} \oint_{\mathcal C}z^2 \widehat M_n(z)dz  + \frac{1}{2\pi i}\oint_{\mathcal C} z^2 \widehat M_n(z_0)dz \\
&= -\frac{1}{2\pi i} \oint_{\mathcal C}z^2 [\widehat M_n(z) - \widehat M_n(z_0)]dz
\end{aligned}
\]

Next, ({[}13{]}, S1.39) and the boundedness of the fixed contour
\(\mathcal C\) imply that there exists a constant \(K\) such that

\[
\begin{aligned}
\sup_{n;z, z_0 \in \mathcal{C}_n}\mathbb E|\widehat M_n(z) - \widehat M_n(z_0)|^2 \leq K
\end{aligned}
\] For each \(z\in\mathcal C\setminus\mathcal C_n\), the regularization
\(\widehat M_n(z)\) equals its value at a point in \(\mathcal C_n\).
Since \(\mathcal C\) is fixed and bounded, the squared distances between
all such evaluation points and \(z_0\in\mathcal C_n\) are uniformly
bounded. Enlarging \(K\) if necessary, we therefore obtain \[
\sup_{\substack{z\in\mathcal C\\ z_0\in\mathcal C_n}}
\mathbb E|\widehat M_n(z)-\widehat M_n(z_0)|^2
\le K,
\] uniformly for all sufficiently large \(n,p\).

Finally, we arrive at

\[
\begin{aligned}
\mathbb E|\widehat\Pi_n^B|^2
&=\frac{1}{4\pi^2}\mathbb E
\left|\oint_{\mathcal C}z^2
[\widehat M_n(z)-\widehat M_n(z_0)]\,dz\right|^2\\
&\le \frac{1}{4\pi^2}\mathbb E\left[
\left(\oint_{\mathcal C}|z|^2
|\widehat M_n(z)-\widehat M_n(z_0)|\,|dz|\right)^2
\right]\\
&\le \frac{\ell(\mathcal C)}{4\pi^2}
\mathbb E\left[
\oint_{\mathcal C}|z|^4
|\widehat M_n(z)-\widehat M_n(z_0)|^2\,|dz|
\right]\\
&=\frac{\ell(\mathcal C)}{4\pi^2}
\oint_{\mathcal C}|z|^4
\mathbb E|\widehat M_n(z)-\widehat M_n(z_0)|^2\,|dz|\\
&\le \frac{K \ell(\mathcal C)^2}{4\pi^2}
\max_{z\in\mathcal C}|z|^4.
\end{aligned}
\]

Above, the first inequality is the triangle inequality for contour
integrals. The second is Cauchy--Schwarz with respect to arc length,
applied to \(|z|^2|\widehat M_n(z)-\widehat M_n(z_0)|\) and the constant
function \(1\). Tonelli's theorem permits the interchange of expectation
and integration.

The final bound is constant and independent of \(n,p\). Together with
the decomposition \(\Pi_n=\widehat\Pi_n^B+r_n^{(A)}+r_n^{(B)}\) and the
vanishing second moments of both remainders, this yields
\(\sup_n\mathbb E|\Pi_n|^2<\infty\). Thus \(\{\Pi_n\}\) is uniformly
integrable.

\section{\texorpdfstring{PROOF OF
Proposition~\ref{prp-estimator-control}}{PROOF OF Proposition~}}\label{proof-of-prp-estimator-control}

Proposition~\ref{prp-estimator-control} is made up of three statements,
which comprise Lemma~\ref{lem-pop-sscm-risk} and
Lemma~\ref{lem-tme-control} below.

\begin{lemma}[Frobenius Bias for
SSCM]\protect\hypertarget{lem-pop-sscm-risk}{}\label{lem-pop-sscm-risk}

Suppose Assumption~\ref{exr-hd-regime} holds. Then
\(\|\pmb \Gamma_{\text{SSCM}} - \pmb\Sigma\|_F^2 = \mathcal O (1/p)\).

\end{lemma}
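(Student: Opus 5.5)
The plan is a triangle-inequality split through the deterministic centering matrix \(\mathbf T\):
\[
\|\pmb\Gamma_{\mathrm{SSCM}}-\pmb\Sigma\|_F \le \|\pmb\Gamma_{\mathrm{SSCM}}-\mathbf T\|_F + \|\mathbf T-\pmb\Sigma\|_F .
\]
Each term is then converted to operator norm, using \(\|\mathbf A\|_F^2\le p\|\mathbf A\|_{\mathrm{op}}^2\) for symmetric \(\mathbf A\). The task therefore reduces to showing that both differences are \(\mathcal O(p^{-1})\) in operator norm. That gives squared Frobenius norms of order \(p\cdot p^{-2}=p^{-1}\).

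\textbf{The second term.} This one is explicit:
\[
\mathbf T-\pmb\Sigma=-\tfrac{2}{p}(\pmb\Sigma^2-\beta_{2,p}\pmb\Sigma).
\]
By Assumption~\ref{exr-hd-regime}(b), \(\|\pmb\Sigma\|_{\mathrm{op}}\le\lambda_+\) and \(\beta_{2,p}\le\lambda_+^2\). Hence \(\|\mathbf T-\pmb\Sigma\|_{\mathrm{op}}\le 4\lambda_+^2/p\). One can even compute directly
\[
\|\mathbf T-\pmb\Sigma\|_F^2=\tfrac{4}{p^2}\operatorname{tr}\big((\pmb\Sigma^2-\beta_{2,p}\pmb\Sigma)^2\big)=\tfrac{4}{p}\big(\beta_{4,p}-2\beta_{2,p}\beta_{3,p}+\beta_{2,p}^3\big)=\mathcal O(1/p).
\]

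\textbf{The first term.} I would invoke the result of [6] already quoted in the paper, \(\|\pmb\Gamma_{\mathrm{SSCM}}-\mathbf T\|_{\mathrm{op}}=o(p^{-1})\). This immediately gives \(\|\pmb\Gamma_{\mathrm{SSCM}}-\mathbf T\|_F^2\le p\cdot o(p^{-2})=o(p^{-1})\). Combining via \((a+b)^2\le 2a^2+2b^2\) yields the claim.

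\textbf{A self-contained alternative.} If one prefers not to rely solely on that citation, I would verify the first term directly. Diagonalize \(\pmb\Sigma=\mathbf O\pmb\Lambda\mathbf O^\top\). By rotational equivariance, \(\pmb\Gamma_{\mathrm{SSCM}}\) is diagonal in the same basis, with entries
\[
p\,\mathbb E\!\left[\frac{\lambda_k Z_k^2}{\sum_j\lambda_j Z_j^2}\right]
\]
for i.i.d.\ standard normal \(Z_j\). One way in is the integral representation \(1/s=\int_0^\infty e^{-ts}\,dt\), which gives exact one-dimensional integrals. Another is a second-order expansion of \(1/Q\), with \(Q=p^{-1}\sum_j\lambda_jZ_j^2\), around \(\mathbb E Q=1\) (using the trace-\(p\) normalization). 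Expanding to second order gives each entry as
\[
\lambda_k-\tfrac{2}{p}(\lambda_k^2-\beta_{2,p}\lambda_k)+\mathcal O(p^{-2}),
\]
uniformly in \(k\), which reproduces \(\mathbf T\). The remainder is controlled by Gaussian concentration of \(Q\) together with the lower bound \(\lambda_-\), which keeps \(Q\) away from zero with overwhelming probability. The main obstacle is this uniform-in-\(k\) control of the expansion remainder, including bounding the contribution from the event where \(Q\) is small. I would handle it by splitting on \(\{|Q-1|\le 1/2\}\), since the ratio is bounded by \(1\) on the complement, whose probability is exponentially small.
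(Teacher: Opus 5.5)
Your main argument is essentially the paper's. Both take the operator-norm approximation \(\|\pmb\Gamma_{\mathrm{SSCM}}-\mathbf T\|_{\mathrm{op}}=o(p^{-1})\) from [6] as the key input and bound the explicit term \(\mathbf T-\pmb\Sigma=-\frac{2}{p}(\pmb\Sigma^2-\beta_{2,p}\pmb\Sigma)\) using the bounded spectrum. The only cosmetic difference is that the paper sums squared eigenvalue gaps in the common eigenbasis, while you use the triangle inequality and \(\|\mathbf A\|_F^2\le p\|\mathbf A\|_{\mathrm{op}}^2\).

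Your self-contained alternative goes beyond the paper, and for this lemma it can be simplified. A first-order expansion giving \(|\xi_k-\lambda_k|=\mathcal O(p^{-1})\) uniformly in \(k\), where \(\xi_k=p\,\mathbb E[\lambda_kZ_k^2/\sum_j\lambda_jZ_j^2]\), already yields the claim, so you need neither \(\mathbf T\) nor an \(\mathcal O(p^{-2})\) remainder.
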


\begin{proof}
By Assumption~\ref{exr-hd-regime}, the eigenvalues \(\lambda_k\) of
\(\pmb\Sigma\) satisfy \(\lambda_- \le \lambda_k \le \lambda_+\) for
constants \(\lambda_-, \lambda_+ > 0\). By rotational equivariance,
\(\pmb\Gamma_{\mathrm{SSCM}}\) is a function of \(\pmb\Sigma\) and
therefore shares its eigenvectors; let \(\xi_k\) denote its eigenvalue
on the eigenvector of \(\lambda_k\), so that
\(\|\pmb\Gamma_{\mathrm{SSCM}} - \pmb\Sigma\|_F^2 = \sum_k (\xi_k - \lambda_k)^2\).

\[
\begin{aligned}
\|\pmb \Gamma_{\text{SSCM}} - \pmb\Sigma\|_F^2 &= \sum_{i=1}^p (\xi_i - \lambda_i)^2 \\ 
&= \sum_{i=1}^p \bigg(\lambda_i - \frac{2}{p}\lambda_i^2 + \frac{2}{p^2}\lambda_i\bigg[\sum_{j=1}^p\lambda_j^2\bigg] +  o(1/p) - \lambda_i\bigg)^2 \\
&= \sum_{i=1}^p \bigg| - \frac{2}{p}\lambda_i^2 + \frac{2}{p^2}\lambda_i\bigg[\sum_{j=1}^p\lambda_j^2\bigg] + o(1/p) \bigg|^2\\
&\leq  \sum_{i=1}^p \bigg(\frac{2}{p}\lambda_+^2 + \frac{2\lambda_+^3}{p} + o(1/p) \bigg)^2\\
&= \sum_{i=1}^p \mathcal O (1/p^2)\\
&= \mathcal O(1/p).
\end{aligned}
\]

Above, the expansion of the population SSCM eigenvalues \(\xi_i\) comes
from {[}6{]}, who show
\(\|\pmb \Gamma_{\text{SSCM}}- \mathbf T\|_{\text{op}} = o(1/p)\). The
eigenvalues of \(\mathbf T\) are provided in Equation S2.1 in the
supplement of {[}13{]}.
\end{proof}

\begin{lemma}[Haar Integration of Trace
Products]\protect\hypertarget{lem-haar}{}\label{lem-haar}

Let \(\mathbf A, \mathbf B \in \mathbb R^{p\times p}\) be deterministic
symmetric matrices with trace zero. Let
\(\pmb \Omega = \mathbf U \mathbf D \mathbf U^\top \in \mathbb R^{p\times p}\)
be a random matrix with \(\mathbf U\) distributed as Haar on the
orthogonal group in dimension \(p\), with \(\mathbf D\) diagonal,
independent of \(\mathbf U\), and \(\text{tr}(\pmb \Omega)=0\) almost
surely. Then
\(\mathbb E[\text{tr}(\mathbf A\pmb \Omega)\text{tr}(\mathbf B \pmb \Omega)] = \frac{2 \text{tr}(\mathbf A \mathbf B) \mathbb E\|\pmb \Omega \|_F^2}{(p-1)(p+2)}\).

\begin{proof}
This follows from expanding this product of traces and applying Lemma
2.22 in {[}19{]} to exploit symmetries of the Haar measure and
convenient cancellation from the traceless matrices after simple albeit
tedious manipulations. Below, \(\delta_{ij}\) denotes the Kronecker
delta.

First, write \[
\begin{aligned}
\mathbb E[\text{tr}(\mathbf A\pmb \Omega)\text{tr}(\mathbf B \pmb \Omega)] &= \mathbb E[\text{tr}(\mathbf A \mathbf U \mathbf D \mathbf U^\top)\text{tr}(\mathbf B \mathbf U \mathbf D \mathbf U^\top)] \\
&=\mathbb E\bigg[\sum_{r,i} A_{ri}(\mathbf U \mathbf D \mathbf U^\top)_{ir}\sum_{\lambda, \alpha} B_{\lambda \alpha}(\mathbf U \mathbf D \mathbf U^\top)_{\alpha \lambda}\bigg]\\
&= \mathbb E\bigg[\sum_{r,i}\sum_j  A_{ri}  U_{ij}  D_{jj}  U_{rj} \sum_{\lambda, \alpha} \sum_\mu  B_{\lambda \alpha}  U_{\alpha \mu}  D_{\mu\mu}  U_{\lambda\mu} \bigg]\\
&= \mathbb E\bigg[\sum_{r,i, \lambda, \alpha}\sum_{j,\mu}  A_{ri}  U_{ij}  D_{jj}  U_{rj}  B_{\lambda \alpha}  U_{\alpha \mu}  D_{\mu\mu}  U_{\lambda\mu} \bigg].
\end{aligned}
\]

Then, since \(\mathbf D\) is independent of \(\mathbf U\),

\begin{equation}\phantomsection\label{eq-haar-intermediate}{
\begin{aligned}
\mathbb E[\text{tr}( \mathbf A\pmb \Omega)\text{tr}( \mathbf B \pmb \Omega)] &= \sum_{r,i, \lambda,\alpha}\sum_{j,\mu}  A_{ri}   B_{\lambda \alpha}  \mathbb E [ D_{jj}  D_{\mu\mu}]\mathbb E[  U_{ij}   U_{rj}  U_{\alpha \mu}  U_{\lambda\mu}].
\end{aligned}
}\end{equation}

Applying Meckes' Lemma 2.22 with indices \(j=s\) and \(\beta = \mu\) in
the original notation,

\[
\begin{aligned}
\mathbb E[  U_{ij}   U_{rj}  U_{\alpha \mu}  U_{\lambda\mu}] &=\frac{p+1}{(p-1)p(p+2)}[\delta_{ir}\delta_{\alpha\lambda}\delta_{jj}\delta_{\mu\mu}+\delta_{i\alpha}\delta_{r \lambda}\delta_{j\mu}\delta_{j\mu}+\delta_{i\lambda}\delta_{r\alpha}\delta_{j\mu}\delta_{j\mu}] \\
& \qquad
- \frac{1}{(p-1)p(p+2)}[\delta_{ir}\delta_{\alpha\lambda}\delta_{j\mu}\delta_{j\mu}+\delta_{ir}\delta_{\alpha\lambda}\delta_{j\mu}\delta_{j\mu}+\delta_{i\alpha}\delta_{r \lambda}\delta_{jj}\delta_{\mu\mu} \\
&\qquad +\delta_{i\alpha}\delta_{r\lambda}\delta_{j\mu}\delta_{j\mu}+\delta_{i\lambda}\delta_{r\alpha}\delta_{jj}\delta_{\mu\mu}+\delta_{i\lambda}\delta_{r\alpha}\delta_{j\mu}\delta_{j\mu}]\\
&=  \frac{p+1}{(p-1)p(p+2)}[\delta_{ir}\delta_{\alpha\lambda}+\delta_{i\alpha}\delta_{r \lambda}\delta_{j\mu}+\delta_{i\lambda}\delta_{r\alpha}\delta_{j\mu}] 
\\
& \qquad - \frac{1}{(p-1)p(p+2)}[\delta_{ir}\delta_{\alpha\lambda}\delta_{j\mu}+\delta_{ir}\delta_{\alpha\lambda}\delta_{j\mu}+\delta_{i\alpha}\delta_{r \lambda}+\delta_{i\alpha}\delta_{r\lambda}\delta_{j\mu}+\delta_{i\lambda}\delta_{r\alpha}+\delta_{i\lambda}\delta_{r\alpha}\delta_{j\mu}]\\
 &= \frac{1}{(p-1)p(p+2)}[(p+1)\delta_{ir}\delta_{\alpha\lambda}\\ &\qquad + p(\delta_{i\alpha}\delta_{r \lambda}\delta_{j\mu}+\delta_{i\lambda}\delta_{r\alpha}\delta_{j\mu}) 
- 2(\delta_{ir}\delta_{\alpha\lambda}\delta_{j\mu})-\delta_{i\alpha}\delta_{r \lambda}-\delta_{i\lambda}\delta_{r\alpha}].
\end{aligned}
\]

Plugging this back into (\ref{eq-haar-intermediate}), we can take
advantage of the fact that any terms that include factors
\(\text{tr}(\mathbf A), \text{tr}(\mathbf B)\), or
\(\mathbb E(\text{tr}(\mathbf D)^2)\) are zero. We start by constraining
the summation to be over indices allowed to remain nonzero by the
Kronecker deltas:

\[
\begin{aligned}
\mathbb E[\text{tr}( \mathbf A\pmb \Omega)\text{tr}( \mathbf B \pmb \Omega)]  &=\frac{1}{(p-1)p(p+2)}\bigg( (p+1)\sum_{r,i, \lambda,\alpha}\sum_{j,\mu}  A_{ri}   B_{\lambda \alpha} \mathbb E [ D_{jj}  D_{\mu\mu}]\delta_{ir}\delta_{\alpha\lambda} \\
& \qquad
 + p\sum_{r,i, \lambda,\alpha}\sum_{j,\mu}  A_{ri}   B_{\lambda \alpha} \mathbb E [D_{jj}  D_{\mu\mu}](\delta_{i\alpha}\delta_{r \lambda}\delta_{j\mu}+\delta_{i\lambda}\delta_{r\alpha}\delta_{j\mu}) \\
 &\qquad
- 2\sum_{r,i, \lambda,\alpha}\sum_{j,\mu}  A_{ri}   B_{\lambda \alpha}   \mathbb E [D_{jj}  D_{\mu\mu}](\delta_{ir}\delta_{\alpha\lambda}\delta_{j\mu})  - \sum_{r,i, \lambda,\alpha}\sum_{j,\mu}  A_{ri}   B_{\lambda \alpha}   \mathbb E [\ D_{jj}  D_{\mu\mu}]\delta_{i\alpha}\delta_{r \lambda} \\
&\qquad - \sum_{r,i, \lambda,\alpha}\sum_{j,\mu}  A_{ri}   B_{\lambda \alpha}  \mathbb E [D_{jj} D_{\mu\mu}]\delta_{i\lambda}\delta_{r\alpha}\bigg) \\
&=\frac{1}{(p-1)p(p+2)}\bigg( (p+1)\sum_{r=i, \lambda=\alpha} A_{ri} B_{\lambda \alpha} \sum_{j,\mu}  \mathbb E [D_{jj} D_{\mu\mu}] \\
& \qquad + p\sum_{r,i, \lambda,\alpha} A_{ri}  B_{\lambda \alpha} (\delta_{i\alpha}\delta_{r \lambda}+\delta_{i\lambda}\delta_{r\alpha})  \sum_{j=\mu}  \mathbb E [D_{jj} D_{\mu\mu}] - 2\sum_{r=i, \lambda=\alpha} A_{ri} B_{\lambda \alpha} \sum_{j=\mu}  \mathbb E [D_{jj} D_{\mu\mu}] \\
& \qquad -\sum_{i=\alpha,r=\lambda,} A_{ri}  B_{\lambda \alpha} \sum_{j,\mu}  \mathbb E [D_{jj} D_{\mu\mu}]-\sum_{i= \lambda,r=\alpha} A_{ri} B_{\lambda \alpha} \sum_{j,\mu}  \mathbb E [D_{jj} D_{\mu\mu}]\bigg)\\
&=\frac{1}{(p-1)p(p+2)}\bigg( (p+1)\sum_{r=i, \lambda=\alpha} A_{ri} B_{\lambda \alpha} \mathbb E(\text{tr}(\mathbf D)^2) \\
& \qquad + p\sum_{r,i, \lambda,\alpha} A_{ri} B_{\lambda \alpha} (\delta_{i\alpha}\delta_{r \lambda}+\delta_{i\lambda}\delta_{r\alpha})  \sum_{j=\mu}  \mathbb E [D_{jj} D_{\mu\mu}]
- 2\text{tr}( \mathbf A)  \text{tr}(\mathbf B) \sum_{j=\mu}  \mathbb E [D_{jj} D_{\mu\mu}] \\
& \qquad -\sum_{i=\alpha,r=\lambda,}A_{ri} B_{\lambda \alpha} \mathbb E(\text{tr}(\mathbf D)^2) -\sum_{i= \lambda,r=\alpha}A_{ri} B_{\lambda \alpha} \mathbb E(\text{tr}(\mathbf D)^2)\bigg)\\
&=\frac{1}{(p-1)p(p+2)}\bigg( 
p\sum_{r,i, \lambda,\alpha} A_{ri} B_{\lambda \alpha} (\delta_{i\alpha}\delta_{r \lambda}+\delta_{i\lambda}\delta_{r\alpha})  \sum_{j}  \mathbb E [D_{jj}^2]\bigg)\\
&=\frac{2}{(p-1)(p+2)}\bigg( 
\sum_{r,i} A_{ri} B_{ri}   \sum_{j}  \mathbb E [D_{jj}^2]
\bigg)\\
& = \frac{2 \text{tr}(\mathbf A \mathbf B) \mathbb E\|\pmb \Omega \|_F^2}{(p-1)(p+2)}.
\end{aligned}
\]
\end{proof}

\end{lemma}

\begin{lemma}[Second-Moment Control for
TME]\protect\hypertarget{lem-tme-moments}{}\label{lem-tme-moments}

Let \(\hat{\pmb\Sigma}_{\text{TME}}\) and
\(\hat{\pmb\Sigma}_{\text{SSCM}}\) denote Tyler's M-estimator and the
SSCM, respectively, for data generated according to (\ref{eq-model}),
with Assumption~\ref{exr-hd-regime} of this paper satisfied and
\(c_p, c \in (0,1)\). Then
\(\text{tr}(\mathbb E(\hat{\pmb\Sigma}_{\text{TME}}^2)) = \mathcal O(p)\).

\begin{proof}
Observe that
\(\text{tr}(\mathbb E(\hat{\pmb\Sigma}_{\text{TME}}^2)) = \mathbb E\Pi_n + \text{tr}(\mathbb E(\hat{\pmb\Sigma}_{\text{SSCM}}^2))\).
By Proposition~\ref{prp-tme-l2}, we have
\(\mathbb E \Pi_n =\mathcal O(1)\). By the exact identity given in the
proof of Lemma~\ref{lem-errors}(a) and the characterization of the
eigenvalues of \(\pmb\Gamma_{\mathrm{SSCM}}\) provided by
Lemma~\ref{lem-pop-sscm-risk}, we have

\[
\begin{aligned}
\mathbb E\operatorname{tr}(\hat{\pmb\Sigma}_{\mathrm{SSCM}}^2)&=pc_p+\left(1-\frac{1}{n}\right)\operatorname{tr}(\pmb\Gamma_{\mathrm{SSCM}}^2)\\
&= pc_p+\left(1-\frac{1}{n}\right)\bigg(\operatorname{tr}(\pmb\Sigma^2)+\mathcal O(1)\bigg)\\
&\leq pc_p+\left(1-\frac{1}{n}\right)\bigg(p\lambda_+^2+\mathcal O(1)\bigg)\\
&=\mathcal O(p).
\end{aligned}
\] Thus,
\(\text{tr}(\mathbb E(\hat{\pmb\Sigma}_{\text{TME}}^2)) = \mathcal O(1) + \mathcal O(p)=\mathcal O(p)\).
\end{proof}

\end{lemma}

\begin{lemma}[Frobenius Bias and Cross-Term Control for
TME]\protect\hypertarget{lem-tme-control}{}\label{lem-tme-control}

Assume Tyler's M-estimator exists, and Assumption~\ref{exr-hd-regime}
holds with \(c_p, c <1\). Then

\begin{enumerate}
\def\labelenumi{\alph{enumi})}
\item
  \(\|\mathbb E (\hat{\pmb\Sigma}_{\text{TME}}) - \pmb\Sigma\|_F^2 = \mathcal O(1/p)\)
  and
\item
  \(\text{tr}(\mathbb E (\hat{\pmb\Sigma}_{\text{TME}})\pmb\Sigma) = \text{tr}(\pmb\Sigma^2) + \mathcal O(1/p)\).
\end{enumerate}

\begin{proof}
Proving both statements relies on recasting quantities as traceless
matrices and leveraging Lemma~\ref{lem-haar} for control of the trace
products involved.

First, it is convenient to leverage the affine equivariance and
distribution-free properties of Tyler's M-estimator for shape to write
\(\hat{\pmb\Sigma}_{\text{TME}}(\mathbf X)\) as a function of the
estimator applied to the whitened data matrix
\(\mathbf Z := \mathbf G\pmb\Sigma^{-1/2}\). Write

\[
\hat{\pmb\Sigma}_{\text{TME}}(\mathbf X) = \frac{\pmb\Sigma^{1/2}\hat{\pmb\Sigma}_0\pmb\Sigma^{1/2}}{\frac{1}{p}\text{tr}(\pmb\Sigma \hat{\pmb\Sigma}_0)},
\]

where
\(\hat{\pmb\Sigma}_0 := \hat{\pmb\Sigma}_{\text{TME}}(\mathbf Z)\).

As a simple first case, consider when the true shape is spherical:
\(\pmb\Sigma = \mathbf I\).

Because Tyler's M-estimator is affine equivariant for shape and the
distribution of \(\mathbf Z\) is rotationally invariant, we must have
\(\mathbb E(\hat{\pmb\Sigma}_0) = \mathbf O \mathbb E(\hat{\pmb\Sigma}_0) \mathbf O^\top\)
for any orthogonal matrix \(\mathbf O \in \mathbb R^{p\times p}\). By
Schur's Lemma, the only matrix that is invariant under congruence
transforms with any orthogonal matrix is a multiple of the identity, so
\(\mathbb E(\hat{\pmb\Sigma}_0) \propto \mathbf I\). Since
\(\text{tr}(\hat{\pmb\Sigma}_0)=p\) a.s., this means TME is exactly
unbiased when \(\pmb\Sigma = \mathbf I\).

Next, we extend to the case of a non-spherical shape matrix.

Define the matrices \(\pmb \Delta := \pmb\Sigma - \mathbf I\),
\(\hat{\mathbf E} :=\hat{\pmb\Sigma}_0-\mathbf I\),
\(\mathbf B := \pmb\Sigma^2 - p^{-1}\text{tr}(\pmb\Sigma^2)\mathbf I\),
and the scalar \(u := p^{-1}\text{tr}(\pmb\Delta\hat{\mathbf E} )\). By
the uniform bounds on the eigenvalues of \(\pmb\Sigma\), the squared
Frobenius norms of the deterministic traceless matrices \(\pmb\Delta\)
and \(\mathbf B\) are \(\mathcal O(p)\). Additionally, by
Lemma~\ref{lem-tme-moments} and the fact that
\(\operatorname{tr}(\hat{\pmb\Sigma}_0)=p\), we have
\(\mathbb E \|\hat{\mathbf E}\|_F^2 =\mathcal O(p)\).

By construction,
\(\text{tr}(\pmb \Delta) =\text{tr}(\hat{\mathbf E})=\text{tr}(\mathbf B) =0\)
almost surely, and \(\mathbb E(\hat{\mathbf E}) =\mathbf 0\). Likewise,
\(\mathbb E(u)=0\). Recall that from the Assumption~\ref{exr-hd-regime},
\(\pmb\Sigma\) has uniformly bounded eigenvalues
\(\lambda_- \leq \lambda_k(\pmb\Sigma) \leq \lambda_+\) for
\(k = 1, \dots, p\). Then we have \(\lambda_-\) as a lower bound on
\(1+u=p^{-1}\text{tr}(\pmb\Sigma \hat{\pmb\Sigma}_0)\):\\
\begin{equation}\phantomsection\label{eq-one-plus-u}{
\begin{aligned}
\lambda_-=\frac{\lambda_-}{p}\text{tr}(\hat{\pmb\Sigma}_0) &=\frac{1}{p}\text{tr}(\lambda_-\mathbf I \hat{\pmb\Sigma}_0) \\
&\leq \frac{1}{p}\text{tr}(\pmb\Sigma \hat{\pmb\Sigma}_0)\\
&= \frac{1}{p}\text{tr}((\pmb \Delta + \mathbf I)(\hat{\mathbf E}+\mathbf I)) \\
&= 1+u.
\end{aligned}
}\end{equation}

Finally, because \(\hat{\pmb\Sigma}_0\) has a rotationally invariant
distribution, its eigenvectors are Haar-distributed on the orthogonal
group in dimension \(p\) and independent of its eigenvalues {[}20{]}.
Thus, because \(\hat{\mathbf E} =\hat{\pmb\Sigma}_0-\mathbf I\), it
shares these same Haar-distributed eigenvectors, while its diagonal
matrix of eigenvalues is simply shifted by the identity. Consequently,
\(\mathbb E(u^2) = \frac{1}{p^2} \mathbb E(\text{tr}(\pmb\Delta\hat{\mathbf E} )\text{tr}(\pmb\Delta\hat{\mathbf E} )) = \frac{1}{p^2} \frac{2 \|\pmb \Delta\|_F^2 \mathbb E\|\hat{\mathbf E}\|_F^2}{(p-1)(p+2)} = \mathcal O(1/p^2)\)
immediately by Lemma~\ref{lem-haar}.

\begin{enumerate}
\def\labelenumi{\alph{enumi})}
\tightlist
\item
  Starting with
  \(\|\mathbb E (\hat{\pmb\Sigma}_{\text{TME}}) - \pmb\Sigma\|_F^2 = \mathcal O(1/p)\).
\end{enumerate}

First we use the unbiasedness of \(\hat{\pmb\Sigma}_0\) to write both
terms as expectations since
\(\mathbb E[\pmb\Sigma^{1/2}\hat{\pmb\Sigma}_0\pmb\Sigma^{1/2}] = \pmb\Sigma\):

\[
\begin{aligned}
\|\mathbb E(\hat{\pmb\Sigma}_{\text{TME}}) - \pmb\Sigma\|_F^2 &= \bigg\| \mathbb E\bigg[\frac{\pmb\Sigma^{1/2}\hat{\pmb\Sigma}_0\pmb\Sigma^{1/2}}{\frac{1}{p}\text{tr}(\pmb\Sigma \hat{\pmb\Sigma}_0)} \bigg] - \mathbb E[\pmb\Sigma^{1/2}\hat{\pmb\Sigma}_0\pmb\Sigma^{1/2}] \bigg\|_F^2 \\
&= \bigg\| \mathbb E\bigg[\frac{\pmb\Sigma^{1/2}\hat{\pmb\Sigma}_0\pmb\Sigma^{1/2}}{\frac{1}{p}\text{tr}(\pmb\Sigma \hat{\pmb\Sigma}_0)} - \pmb\Sigma^{1/2}\hat{\pmb\Sigma}_0\pmb\Sigma^{1/2} \bigg]\bigg\|_F^2 \\
&= \bigg\| \mathbb E\bigg[ \pmb\Sigma^{1/2}\hat{\pmb\Sigma}_0\pmb\Sigma^{1/2} \bigg(\frac{1}{\frac{1}{p}\text{tr}(\pmb\Sigma \hat{\pmb\Sigma}_0)}-1\bigg) \bigg]\bigg\|_F^2.
\end{aligned}
\]

Next, using (\ref{eq-one-plus-u}) to write the trace in the denominator
in terms of \(u\):

\[
\begin{aligned}
\|\mathbb E(\hat{\pmb\Sigma}_{\text{TME}}) - \pmb\Sigma\|_F^2 &= \bigg\| \mathbb E\bigg[ \pmb\Sigma^{1/2}\hat{\pmb\Sigma}_0\pmb\Sigma^{1/2} \bigg(\frac{1}{1+u}-1\bigg) \bigg]\bigg\|_F^2 \\
&= \bigg(\bigg\| \mathbb E\bigg[ \pmb\Sigma^{1/2}\hat{\pmb\Sigma}_0\pmb\Sigma^{1/2} \bigg(\frac{-u}{1+u}\bigg) \bigg]\bigg\|_F\bigg)^2 \\
&\leq \bigg( \mathbb E\bigg\| \pmb\Sigma^{1/2}\hat{\pmb\Sigma}_0\pmb\Sigma^{1/2} \bigg(\frac{-u}{1+u}\bigg) \bigg\|_F\bigg)^2 & (*)\\
&= \bigg( \mathbb E\bigg\{\bigg\| \pmb\Sigma^{1/2}\hat{\pmb\Sigma}_0\pmb\Sigma^{1/2}\bigg\|_F\bigg|\frac{u}{1+u}\bigg|\bigg\}\bigg)^2 \\
&\leq \mathbb E \|\pmb\Sigma^{1/2}\hat{\pmb\Sigma}_0\pmb\Sigma^{1/2} \|_F^2\ \mathbb E \bigg[\bigg( \frac{u}{1+u} \bigg)^2\bigg] & (**)\\
&\leq \mathbb E \|\pmb\Sigma^{1/2}\hat{\pmb\Sigma}_0\pmb\Sigma^{1/2} \|_F^2\ \frac{1}{\lambda_-^2}\mathbb E [u^2] \\
&\leq \frac{1}{\lambda_-^2}\|\pmb\Sigma\|_{\text{op}}^2 \mathbb E\| \hat{\pmb\Sigma}_0 \|_F^2\mathbb E [u^2] \\
&\leq \frac{\lambda_+^2}{\lambda_-^2} \text{tr}(\mathbb E(\hat{\pmb\Sigma}_0^2))\mathbb E [u^2] \\
&= \mathcal O(p)\mathcal O(1/p^2)\\
&= \mathcal O (1/p).
\end{aligned}
\]

Where \((*)\) follows from Jensen's Inequality and \((**)\) from the
Cauchy-Schwarz inequality. The inequality that follows on the subsequent
line is a direct consequence of (\ref{eq-one-plus-u}) and its bound
\(1+u \geq \lambda_-\), and
\(\text{tr}(\mathbb E(\hat{\pmb\Sigma}_0^2)) = \mathcal O(p)\) is shown
in Lemma~\ref{lem-tme-moments}.

\begin{enumerate}
\def\labelenumi{\alph{enumi})}
\setcounter{enumi}{1}
\tightlist
\item
  Next,
  \(\text{tr}(\mathbb E (\hat{\pmb\Sigma}_{\text{TME}})\pmb\Sigma) = \text{tr}(\pmb\Sigma^2) + \mathcal O(1/p)\)
\end{enumerate}

\[
\begin{aligned}
\text{tr}(\hat{\pmb\Sigma}_{\text{TME}}\pmb\Sigma) &= \frac{\text{tr}(\pmb\Sigma^{1/2}\hat{\pmb\Sigma}_0\pmb\Sigma^{1/2}\pmb\Sigma)}{\frac{1}{p}\text{tr}(\pmb\Sigma \hat{\pmb\Sigma}_0)} \\
&= \frac{\text{tr}(\pmb\Sigma^{2}\hat{\pmb\Sigma}_0)}{\frac{1}{p}\text{tr}(\pmb\Sigma \hat{\pmb\Sigma}_0)} \\
&= \frac{\text{tr}((\mathbf B + \frac{1}{p}\text{tr}(\pmb\Sigma^2)\mathbf I)(\hat{\mathbf E} + \mathbf I))}{\frac{1}{p}\text{tr}(\pmb\Sigma \hat{\pmb\Sigma}_0)} \\
&= \frac{1}{\frac{1}{p}\text{tr}(\pmb\Sigma \hat{\pmb\Sigma}_0)}\text{tr}[(\mathbf B\hat{\mathbf E})+\mathbf B + \frac{1}{p}\text{tr}(\pmb\Sigma^2)\hat{\mathbf E} + \frac{1}{p}\text{tr}(\pmb\Sigma^2)\mathbf I]\\
&= \frac{\text{tr}(\mathbf B\hat{\mathbf E})}{\frac{1}{p}\text{tr}(\pmb\Sigma \hat{\pmb\Sigma}_0)} + \frac{\text{tr}(\pmb\Sigma^2)}{\frac{1}{p}\text{tr}(\pmb\Sigma \hat{\pmb\Sigma}_0)} \\
&= \frac{1}{1+u}(\text{tr}(\mathbf B\hat{\mathbf E})+\text{tr}(\pmb\Sigma^2))\\
&= \bigg(1-u+\frac{u^2}{1+u}\bigg)(\text{tr}(\mathbf B\hat{\mathbf E})+\text{tr}(\pmb\Sigma^2)) \\
&= \text{tr}(\pmb\Sigma^2)+\text{tr}(\mathbf B \hat{\mathbf E}) -u\text{tr}(\pmb\Sigma^2) - u\text{tr}(\mathbf B \hat{\mathbf E}) +\frac{u^2}{1+u}[\text{tr}(\pmb\Sigma^2) +\text{tr}(\mathbf B \hat{\mathbf E})].
\end{aligned}
\]

Taking absolute deviations of expectations, since
\(\mathbb E\text{tr}(\mathbf B \hat{\mathbf E})=0\) and
\(\mathbb E(u\text{tr}(\pmb\Sigma^2))=0\), we can write

\[
\begin{aligned}
|\text{tr}(\mathbb E (\hat{\pmb\Sigma}_{\text{TME}})\pmb\Sigma) - \text{tr}(\pmb\Sigma^2) |&\leq 0 + 0 + |\mathbb E(u\text{tr}(\mathbf B \hat{\mathbf E}))| + \bigg|\mathbb E \bigg[\frac{u^2}{1+u}[\text{tr}(\pmb\Sigma^2) +\text{tr}(\mathbf B \hat{\mathbf E})]\bigg]\bigg| \\
&= \bigg|\frac{1}{p}\mathbb E[\text{tr}(\pmb \Delta \hat{\mathbf E})\text{tr}(\mathbf B \hat{\mathbf E})]\bigg|  +\bigg|\mathbb E \bigg[\frac{u^2}{1+u}[\text{tr}(\pmb\Sigma^2) +\text{tr}(\mathbf B \hat{\mathbf E})]\bigg]\bigg| \\
& \leq  \bigg|\frac{1}{p}\mathbb E[\text{tr}(\pmb \Delta \hat{\mathbf E})\text{tr}(\mathbf B \hat{\mathbf E})]\bigg|  +\bigg|\frac{1}{\lambda_-}\mathbb E \bigg[u^2[\text{tr}(\pmb\Sigma^2) +\text{tr}(\mathbf B \hat{\mathbf E})]\bigg]\bigg| \\
&= \bigg|\frac{1}{p}\frac{2 \text{tr}(\pmb \Delta \mathbf B) \mathbb E\|\hat{\mathbf E}\|_F^2}{(p-1)(p+2)}\bigg|+\bigg|\frac{1}{\lambda_-}\mathbb E \bigg[u^2[\text{tr}(\pmb\Sigma^2) +\text{tr}((\pmb\Sigma^2 - \frac{1}{p}\text{tr}(\pmb\Sigma^2)\mathbf I)\hat{\mathbf E})]\bigg]\bigg| \\
&= \bigg|\frac{1}{p}\frac{2 \text{tr}(\pmb \Delta \mathbf B) \mathbb E\|\hat{\mathbf E}\|_F^2}{(p-1)(p+2)}\bigg|+\bigg|\frac{1}{\lambda_-}\mathbb E \bigg[u^2[\text{tr}(\pmb\Sigma^2) +\text{tr}(\pmb\Sigma^2\hat{\mathbf E})]\bigg]\bigg| \\
&= \bigg|\frac{1}{p}\frac{2 \text{tr}(\pmb \Delta \mathbf B) \mathbb E\|\hat{\mathbf E}\|_F^2}{(p-1)(p+2)}\bigg|+\bigg|\frac{1}{\lambda_-}\mathbb E \bigg[u^2[\text{tr}(\pmb\Sigma^2 + \pmb\Sigma^2(\hat{\pmb\Sigma}_0 - \mathbf I))]\bigg]\bigg| \\
&= \bigg|\frac{1}{p}\frac{2 \text{tr}(\pmb \Delta \mathbf B) \mathbb E\|\hat{\mathbf E}\|_F^2}{(p-1)(p+2)}\bigg| +\bigg|\frac{1}{\lambda_-}\mathbb E [u^2\text{tr}(\pmb\Sigma^2\hat{\pmb\Sigma}_0)]\bigg| \\
&\leq \bigg|\frac{1}{p}\frac{2 \text{tr}(\pmb \Delta \mathbf B) \mathbb E\|\hat{\mathbf E}\|_F^2}{(p-1)(p+2)}\bigg|+\frac{1}{\lambda_-}\mathbb E [u^2 \lambda_+^2\text{tr}(\hat{\pmb\Sigma}_0)]  \\
&= \bigg|\frac{1}{p}\frac{2 \text{tr}(\pmb \Delta \mathbf B) \mathbb E\|\hat{\mathbf E}\|_F^2}{(p-1)(p+2)}\bigg|+\frac{\lambda_+^2 p}{\lambda_-}\mathbb E  [u^2] \\
&= \frac{1}{p}\frac{2 \mathcal O(p) \mathcal O(p)}{(p-1)(p+2)}+\frac{\lambda_+^2p}{\lambda_-}\mathcal O(1/p^2)\\
&= \mathcal O(1/p),
\end{aligned}
\]

where the penultimate line uses the fact that
\(|\text{tr}(\pmb\Delta \mathbf B)| \leq \|\pmb \Delta \|_F \| \mathbf B \|_F = \mathcal O(\sqrt p)\mathcal O(\sqrt p) = \mathcal O(p)\).

Thus we have
\(\text{tr}(\mathbb E (\hat{\pmb\Sigma}_{\text{TME}})\pmb\Sigma) = \text{tr}(\pmb\Sigma^2) + \mathcal O(1/p)\).
\end{proof}

\end{lemma}

\newpage

\section{SIMULATION TABLES}\label{simulation-tables}

For ease of comparison across varying \(p\), in the tables below,
theoretical risk is reported as
\(p^{-1} \mathbb E \|\hat{\pmb\Sigma} - \pmb\Sigma\|_F^2\), and
simulated risk is reported as the average similarly scaled risk,
\((Rp)^{-1}\sum_{r=1}^R\|\hat{\boldsymbol{\Sigma}}^{(r)}-\boldsymbol{\Sigma}\|_F^2\),
over \(R=150\) simulations, with \(\hat{\boldsymbol{\Sigma}}^{(r)}\) the
\(r\)th replicate. Monte Carlo standard errors, in parentheses, are
standard errors of the mean over replicates; because the theoretical
risk is a constant, the same standard error applies to Sim. \(-\)
Theory. The SSCM's risk improvement relative to a benchmark is computed
from paired replicates, with a delta-method standard error. Theoretical
values are the approximations of the high-dimensional limits without
finite-\(p\) bias correction. Aspect ratios \(c_p=p/n\) are displayed to
one decimal place; theoretical risks are calculated using the unrounded
ratios.

\begingroup\fontsize{8}{10}\selectfont

\begin{longtable}[t]{lrrrrrr}
\caption{Risk and Runtime by Shape Estimator, $p = 25$}\\
\toprule
Estimator & \makecell[c]{Sim. Risk\\(MC SE)} & \makecell[c]{Theory\\Risk} & \makecell[c]{Sim. $-$\\Theory} & \makecell[c]{Runtime\\(ms)} & \makecell[c]{SSCM Risk\\Improv. \% (SE)} & \makecell[c]{Runtime\\vs SSCM}\\
\midrule
\endfirsthead
\caption[]{Risk and Runtime by Shape Estimator, $p = 25$ \textit{(continued)}}\\
\toprule
Estimator & \makecell[c]{Sim. Risk\\(MC SE)} & \makecell[c]{Theory\\Risk} & \makecell[c]{Sim. $-$\\Theory} & \makecell[c]{Runtime\\(ms)} & \makecell[c]{SSCM Risk\\Improv. \% (SE)} & \makecell[c]{Runtime\\vs SSCM}\\
\midrule
\endhead

\endfoot
\bottomrule
\endlastfoot
\addlinespace[0.3em]
\multicolumn{7}{l}{\textbf{$c_p=0.1, \rho=8$}}\\
\hspace{1em}SSCM & 0.1062 (0.0013) & 0.0906 & 0.0157 & 0.08 & - & -\\
\hspace{1em}Latent SCM & 0.1082 (0.0018) & 0.1094 & -0.0013 & 0.03 & 1.8 (1.1) & -58.5\%\\
\hspace{1em}Tyler's M & 0.1155 (0.0019) & 0.1183 & -0.0029 & 1.39 & 8.01 (0.81) & 1712.6\%\\
\addlinespace[0.3em]
\multicolumn{7}{l}{\textbf{$c_p=0.1, \rho=100$}}\\
\hspace{1em}SSCM & 0.0838 (0.0025) & 0.0815 & 0.0023 & 0.10 & - & -\\
\hspace{1em}Latent SCM & 0.1183 (0.0038) & 0.1185 & -0.0002 & 0.04 & 29.1 (1.6) & -58.1\%\\
\hspace{1em}Tyler's M & 0.1200 (0.0039) & 0.1274 & -0.0074 & 1.93 & 30.2 (1.2) & 1928.6\%\\
\addlinespace[0.3em]
\multicolumn{7}{l}{\textbf{$c_p=0.1, \rho=10000$}}\\
\hspace{1em}SSCM & 0.0782 (0.0029) & 0.0800 & -0.0018 & 0.07 & - & -\\
\hspace{1em}Latent SCM & 0.1194 (0.0043) & 0.1200 & -0.0006 & 0.03 & 34.5 (1.8) & -57.4\%\\
\hspace{1em}Tyler's M & 0.1206 (0.0042) & 0.1289 & -0.0082 & 1.38 & 35.2 (1.2) & 1825.1\%\\
\addlinespace[0.3em]
\multicolumn{7}{l}{\textbf{$c_p=0.5, \rho=8$}}\\
\hspace{1em}SSCM & 0.4753 (0.0059) & 0.4528 & 0.0226 & 0.04 & - & -\\
\hspace{1em}Latent SCM & 0.5534 (0.0097) & 0.5472 & 0.0062 & 0.01 & 14.1 (1.4) & -64.5\%\\
\hspace{1em}Tyler's M & 0.623 (0.011) & 0.627 & -0.005 & 2.12 & 23.65 (0.79) & 4987.3\%\\
\addlinespace[0.3em]
\multicolumn{7}{l}{\textbf{$c_p=0.5, \rho=100$}}\\
\hspace{1em}SSCM & 0.435 (0.011) & 0.408 & 0.027 & 0.04 & - & -\\
\hspace{1em}Latent SCM & 0.608 (0.018) & 0.592 & 0.016 & 0.01 & 28.6 (1.5) & -63.4\%\\
\hspace{1em}Tyler's M & 0.664 (0.019) & 0.672 & -0.008 & 2.12 & 34.6 (1.4) & 5163.7\%\\
\addlinespace[0.3em]
\multicolumn{7}{l}{\textbf{$c_p=0.5, \rho=10000$}}\\
\hspace{1em}SSCM & 0.409 (0.013) & 0.400 & 0.009 & 0.04 & - & -\\
\hspace{1em}Latent SCM & 0.632 (0.022) & 0.600 & 0.032 & 0.01 & 35.2 (2.0) & -63.3\%\\
\hspace{1em}Tyler's M & 0.676 (0.025) & 0.680 & -0.004 & 2.27 & 39.5 (1.7) & 5492.9\%\\
\addlinespace[0.3em]
\multicolumn{7}{l}{\textbf{$c_p=0.9, \rho=8$}}\\
\hspace{1em}SSCM & 0.832 (0.010) & 0.809 & 0.024 & 0.04 & - & -\\
\hspace{1em}Latent SCM & 0.962 (0.018) & 0.977 & -0.015 & 0.01 & 13.5 (1.4) & -61.4\%\\
\hspace{1em}Tyler's M & 1.526 (0.028) & 1.644 & -0.117 & 11.55 & 45.48 (0.98) & 30803.4\%\\
\addlinespace[0.3em]
\multicolumn{7}{l}{\textbf{$c_p=0.9, \rho=100$}}\\
\hspace{1em}SSCM & 0.749 (0.020) & 0.728 & 0.021 & 0.04 & - & -\\
\hspace{1em}Latent SCM & 1.039 (0.038) & 1.058 & -0.019 & 0.01 & 27.9 (2.0) & -65.8\%\\
\hspace{1em}Tyler's M & 1.659 (0.056) & 1.725 & -0.066 & 11.28 & 54.9 (1.5) & 29201.6\%\\
\addlinespace[0.3em]
\multicolumn{7}{l}{\textbf{$c_p=0.9, \rho=10000$}}\\
\hspace{1em}SSCM & 0.678 (0.019) & 0.714 & -0.036 & 0.04 & - & -\\
\hspace{1em}Latent SCM & 1.015 (0.035) & 1.071 & -0.057 & 0.01 & 33.2 (2.2) & -65.1\%\\
\hspace{1em}Tyler's M & 1.601 (0.057) & 1.738 & -0.137 & 11.85 & 57.6 (1.5) & 29727.0\%\\*
\end{longtable}
\endgroup{}

\begingroup\fontsize{8}{10}\selectfont

\begin{longtable}[t]{lrrrrrr}
\caption{Risk and Runtime by Shape Estimator, $p = 50$}\\
\toprule
Estimator & \makecell[c]{Sim. Risk\\(MC SE)} & \makecell[c]{Theory\\Risk} & \makecell[c]{Sim. $-$\\Theory} & \makecell[c]{Runtime\\(ms)} & \makecell[c]{SSCM Risk\\Improv. \% (SE)} & \makecell[c]{Runtime\\vs SSCM}\\
\midrule
\endfirsthead
\caption[]{Risk and Runtime by Shape Estimator, $p = 50$ \textit{(continued)}}\\
\toprule
Estimator & \makecell[c]{Sim. Risk\\(MC SE)} & \makecell[c]{Theory\\Risk} & \makecell[c]{Sim. $-$\\Theory} & \makecell[c]{Runtime\\(ms)} & \makecell[c]{SSCM Risk\\Improv. \% (SE)} & \makecell[c]{Runtime\\vs SSCM}\\
\midrule
\endhead

\endfoot
\bottomrule
\endlastfoot
\addlinespace[0.3em]
\multicolumn{7}{l}{\textbf{$c_p=0.1, \rho=8$}}\\
\hspace{1em}SSCM & 0.10114 (0.00073) & 0.09528 & 0.00586 & 0.24 & - & -\\
\hspace{1em}Latent SCM & 0.10691 (0.00089) & 0.10472 & 0.00219 & 0.12 & 5.40 (0.43) & -47.2\%\\
\hspace{1em}Tyler's M & 0.11083 (0.00093) & 0.10917 & 0.00166 & 4.09 & 8.74 (0.28) & 1628.8\%\\
\addlinespace[0.3em]
\multicolumn{7}{l}{\textbf{$c_p=0.1, \rho=100$}}\\
\hspace{1em}SSCM & 0.0938 (0.0012) & 0.0908 & 0.0031 & 0.22 & - & -\\
\hspace{1em}Latent SCM & 0.1129 (0.0016) & 0.1092 & 0.0037 & 0.15 & 16.91 (0.72) & -34.3\%\\
\hspace{1em}Tyler's M & 0.1162 (0.0016) & 0.1137 & 0.0025 & 4.42 & 19.24 (0.59) & 1886.4\%\\
\addlinespace[0.3em]
\multicolumn{7}{l}{\textbf{$c_p=0.1, \rho=10000$}}\\
\hspace{1em}SSCM & 0.0893 (0.0014) & 0.0900 & -0.0007 & 0.22 & - & -\\
\hspace{1em}Latent SCM & 0.1075 (0.0017) & 0.1100 & -0.0025 & 0.15 & 16.91 (0.82) & -31.8\%\\
\hspace{1em}Tyler's M & 0.1111 (0.0018) & 0.1144 & -0.0033 & 4.21 & 19.63 (0.62) & 1846.5\%\\
\addlinespace[0.3em]
\multicolumn{7}{l}{\textbf{$c_p=0.5, \rho=8$}}\\
\hspace{1em}SSCM & 0.4849 (0.0032) & 0.4764 & 0.0085 & 0.08 & - & -\\
\hspace{1em}Latent SCM & 0.5327 (0.0050) & 0.5236 & 0.0091 & 0.04 & 8.98 (0.66) & -49.7\%\\
\hspace{1em}Tyler's M & 0.5661 (0.0043) & 0.5636 & 0.0025 & 4.11 & 14.35 (0.32) & 5339.8\%\\
\addlinespace[0.3em]
\multicolumn{7}{l}{\textbf{$c_p=0.5, \rho=100$}}\\
\hspace{1em}SSCM & 0.4500 (0.0061) & 0.4538 & -0.0037 & 0.07 & - & -\\
\hspace{1em}Latent SCM & 0.5357 (0.0087) & 0.5462 & -0.0106 & 0.04 & 15.99 (0.92) & -51.3\%\\
\hspace{1em}Tyler's M & 0.5674 (0.0080) & 0.5862 & -0.0188 & 4.30 & 20.69 (0.57) & 5669.6\%\\
\addlinespace[0.3em]
\multicolumn{7}{l}{\textbf{$c_p=0.5, \rho=10000$}}\\
\hspace{1em}SSCM & 0.4495 (0.0076) & 0.4500 & -0.0006 & 0.07 & - & -\\
\hspace{1em}Latent SCM & 0.5533 (0.0093) & 0.5500 & 0.0033 & 0.04 & 18.77 (0.88) & -51.2\%\\
\hspace{1em}Tyler's M & 0.5891 (0.0099) & 0.5900 & -0.0008 & 4.28 & 23.71 (0.71) & 5798.2\%\\
\addlinespace[0.3em]
\multicolumn{7}{l}{\textbf{$c_p=0.9, \rho=8$}}\\
\hspace{1em}SSCM & 0.8646 (0.0061) & 0.8507 & 0.0139 & 0.06 & - & -\\
\hspace{1em}Latent SCM & 0.9445 (0.0100) & 0.9350 & 0.0094 & 0.03 & 8.45 (0.71) & -54.8\%\\
\hspace{1em}Tyler's M & 1.287 (0.015) & 1.268 & 0.018 & 19.10 & 32.80 (0.63) & 32683.5\%\\
\addlinespace[0.3em]
\multicolumn{7}{l}{\textbf{$c_p=0.9, \rho=100$}}\\
\hspace{1em}SSCM & 0.823 (0.011) & 0.810 & 0.013 & 0.06 & - & -\\
\hspace{1em}Latent SCM & 0.985 (0.017) & 0.975 & 0.009 & 0.03 & 16.5 (1.1) & -52.3\%\\
\hspace{1em}Tyler's M & 1.313 (0.021) & 1.309 & 0.005 & 18.53 & 37.35 (0.83) & 31687.3\%\\
\addlinespace[0.3em]
\multicolumn{7}{l}{\textbf{$c_p=0.9, \rho=10000$}}\\
\hspace{1em}SSCM & 0.782 (0.012) & 0.804 & -0.021 & 0.06 & - & -\\
\hspace{1em}Latent SCM & 0.954 (0.017) & 0.982 & -0.028 & 0.03 & 18.0 (1.1) & -57.5\%\\
\hspace{1em}Tyler's M & 1.277 (0.021) & 1.315 & -0.038 & 18.47 & 38.75 (0.97) & 29532.9\%\\*
\end{longtable}
\endgroup{}

\begingroup\fontsize{8}{10}\selectfont

\begin{longtable}[t]{lrrrrrr}
\caption{Risk and Runtime by Shape Estimator, $p = 100$}\\
\toprule
Estimator & \makecell[c]{Sim. Risk\\(MC SE)} & \makecell[c]{Theory\\Risk} & \makecell[c]{Sim. $-$\\Theory} & \makecell[c]{Runtime\\(ms)} & \makecell[c]{SSCM Risk\\Improv. \% (SE)} & \makecell[c]{Runtime\\vs SSCM}\\
\midrule
\endfirsthead
\caption[]{Risk and Runtime by Shape Estimator, $p = 100$ \textit{(continued)}}\\
\toprule
Estimator & \makecell[c]{Sim. Risk\\(MC SE)} & \makecell[c]{Theory\\Risk} & \makecell[c]{Sim. $-$\\Theory} & \makecell[c]{Runtime\\(ms)} & \makecell[c]{SSCM Risk\\Improv. \% (SE)} & \makecell[c]{Runtime\\vs SSCM}\\
\midrule
\endhead

\endfoot
\bottomrule
\endlastfoot
\addlinespace[0.3em]
\multicolumn{7}{l}{\textbf{$c_p=0.1, \rho=8$}}\\
\hspace{1em}SSCM & 0.09919 (0.00037) & 0.09764 & 0.00155 & 0.96 & - & -\\
\hspace{1em}Latent SCM & 0.10304 (0.00042) & 0.10236 & 0.00068 & 0.66 & 3.73 (0.19) & -31.4\%\\
\hspace{1em}Tyler's M & 0.10500 (0.00041) & 0.10458 & 0.00042 & 18.03 & 5.533 (0.096) & 1769.9\%\\
\addlinespace[0.3em]
\multicolumn{7}{l}{\textbf{$c_p=0.1, \rho=100$}}\\
\hspace{1em}SSCM & 0.09453 (0.00074) & 0.09538 & -0.00085 & 1.04 & - & -\\
\hspace{1em}Latent SCM & 0.10350 (0.00083) & 0.10462 & -0.00112 & 0.61 & 8.67 (0.33) & -41.1\%\\
\hspace{1em}Tyler's M & 0.10559 (0.00083) & 0.10685 & -0.00126 & 18.45 & 10.48 (0.26) & 1668.8\%\\
\addlinespace[0.3em]
\multicolumn{7}{l}{\textbf{$c_p=0.1, \rho=10000$}}\\
\hspace{1em}SSCM & 0.09537 (0.00077) & 0.09500 & 0.00037 & 1.00 & - & -\\
\hspace{1em}Latent SCM & 0.10562 (0.00076) & 0.10500 & 0.00062 & 0.63 & 9.70 (0.40) & -37.5\%\\
\hspace{1em}Tyler's M & 0.10742 (0.00081) & 0.10722 & 0.00020 & 17.93 & 11.21 (0.27) & 1687.4\%\\
\addlinespace[0.3em]
\multicolumn{7}{l}{\textbf{$c_p=0.5, \rho=8$}}\\
\hspace{1em}SSCM & 0.4884 (0.0018) & 0.4882 & 0.0002 & 0.23 & - & -\\
\hspace{1em}Latent SCM & 0.5108 (0.0025) & 0.5118 & -0.0010 & 0.15 & 4.38 (0.29) & -35.9\%\\
\hspace{1em}Tyler's M & 0.5299 (0.0021) & 0.5318 & -0.0019 & 13.79 & 7.83 (0.13) & 5883.4\%\\
\addlinespace[0.3em]
\multicolumn{7}{l}{\textbf{$c_p=0.5, \rho=100$}}\\
\hspace{1em}SSCM & 0.4722 (0.0032) & 0.4769 & -0.0047 & 0.23 & - & -\\
\hspace{1em}Latent SCM & 0.5201 (0.0040) & 0.5231 & -0.0030 & 0.15 & 9.21 (0.43) & -33.5\%\\
\hspace{1em}Tyler's M & 0.5365 (0.0039) & 0.5431 & -0.0067 & 13.76 & 11.98 (0.29) & 5914.3\%\\
\addlinespace[0.3em]
\multicolumn{7}{l}{\textbf{$c_p=0.5, \rho=10000$}}\\
\hspace{1em}SSCM & 0.4773 (0.0042) & 0.4750 & 0.0023 & 0.23 & - & -\\
\hspace{1em}Latent SCM & 0.5296 (0.0051) & 0.5250 & 0.0046 & 0.16 & 9.87 (0.47) & -33.0\%\\
\hspace{1em}Tyler's M & 0.5431 (0.0048) & 0.5450 & -0.0019 & 14.48 & 12.12 (0.32) & 6108.7\%\\
\addlinespace[0.3em]
\multicolumn{7}{l}{\textbf{$c_p=0.9, \rho=8$}}\\
\hspace{1em}SSCM & 0.8837 (0.0032) & 0.8796 & 0.0040 & 0.16 & - & -\\
\hspace{1em}Latent SCM & 0.9268 (0.0054) & 0.9222 & 0.0047 & 0.10 & 4.66 (0.39) & -38.8\%\\
\hspace{1em}Tyler's M & 1.1065 (0.0048) & 1.1040 & 0.0026 & 62.37 & 20.14 (0.24) & 38984.3\%\\
\addlinespace[0.3em]
\multicolumn{7}{l}{\textbf{$c_p=0.9, \rho=100$}}\\
\hspace{1em}SSCM & 0.8564 (0.0065) & 0.8592 & -0.0028 & 0.16 & - & -\\
\hspace{1em}Latent SCM & 0.9394 (0.0082) & 0.9426 & -0.0032 & 0.10 & 8.83 (0.52) & -39.0\%\\
\hspace{1em}Tyler's M & 1.1137 (0.0085) & 1.1244 & -0.0107 & 61.13 & 23.11 (0.43) & 38798.6\%\\
\addlinespace[0.3em]
\multicolumn{7}{l}{\textbf{$c_p=0.9, \rho=10000$}}\\
\hspace{1em}SSCM & 0.8561 (0.0069) & 0.8559 & 0.0002 & 0.16 & - & -\\
\hspace{1em}Latent SCM & 0.9553 (0.0089) & 0.9459 & 0.0094 & 0.10 & 10.38 (0.53) & -38.0\%\\
\hspace{1em}Tyler's M & 1.1264 (0.0090) & 1.1277 & -0.0013 & 62.46 & 24.00 (0.48) & 38698.4\%\\*
\end{longtable}
\endgroup{}

\begingroup\fontsize{8}{10}\selectfont

\begin{longtable}[t]{lrrrrrr}
\caption{Risk and Runtime by Shape Estimator, $p = 1000$}\\
\toprule
Estimator & \makecell[c]{Sim. Risk\\(MC SE)} & \makecell[c]{Theory\\Risk} & \makecell[c]{Sim. $-$\\Theory} & \makecell[c]{Runtime\\(ms)} & \makecell[c]{SSCM Risk\\Improv. \% (SE)} & \makecell[c]{Runtime\\vs SSCM}\\
\midrule
\endfirsthead
\caption[]{Risk and Runtime by Shape Estimator, $p = 1000$ \textit{(continued)}}\\
\toprule
Estimator & \makecell[c]{Sim. Risk\\(MC SE)} & \makecell[c]{Theory\\Risk} & \makecell[c]{Sim. $-$\\Theory} & \makecell[c]{Runtime\\(ms)} & \makecell[c]{SSCM Risk\\Improv. \% (SE)} & \makecell[c]{Runtime\\vs SSCM}\\
\midrule
\endhead

\endfoot
\bottomrule
\endlastfoot
\addlinespace[0.3em]
\multicolumn{7}{l}{\textbf{$c_p=0.1, \rho=8$}}\\
\hspace{1em}SSCM & 0.099721 (0.000040) & 0.099764 & -0.000043 & 584.39 & - & -\\
\hspace{1em}Latent SCM & 0.100202 (0.000043) & 0.100236 & -0.000034 & 496.04 & 0.480 (0.011) & -15.1\%\\
\hspace{1em}Tyler's M & 0.100404 (0.000041) & 0.100458 & -0.000054 & 11769.15 & 0.6807 (0.0032) & 1913.9\%\\
\addlinespace[0.3em]
\multicolumn{7}{l}{\textbf{$c_p=0.1, \rho=100$}}\\
\hspace{1em}SSCM & 0.099584 (0.000078) & 0.099538 & 0.000047 & 568.09 & - & -\\
\hspace{1em}Latent SCM & 0.100536 (0.000080) & 0.100462 & 0.000073 & 479.57 & 0.946 (0.015) & -15.6\%\\
\hspace{1em}Tyler's M & 0.100723 (0.000079) & 0.100685 & 0.000038 & 11565.28 & 1.1302 (0.0095) & 1935.8\%\\
\addlinespace[0.3em]
\multicolumn{7}{l}{\textbf{$c_p=0.1, \rho=10000$}}\\
\hspace{1em}SSCM & 0.099559 (0.000084) & 0.099500 & 0.000058 & 573.55 & - & -\\
\hspace{1em}Latent SCM & 0.100590 (0.000089) & 0.100500 & 0.000091 & 484.30 & 1.026 (0.021) & -15.6\%\\
\hspace{1em}Tyler's M & 0.100776 (0.000085) & 0.100722 & 0.000054 & 11571.41 & 1.208 (0.010) & 1917.5\%\\
\addlinespace[0.3em]
\multicolumn{7}{l}{\textbf{$c_p=0.5, \rho=8$}}\\
\hspace{1em}SSCM & 0.49881 (0.00018) & 0.49882 & -0.00001 & 110.14 & - & -\\
\hspace{1em}Latent SCM & 0.50147 (0.00022) & 0.50118 & 0.00029 & 100.14 & 0.529 (0.026) & -9.1\%\\
\hspace{1em}Tyler's M & 0.50315 (0.00018) & 0.50318 & -0.00003 & 6576.61 & 0.8614 (0.0041) & 5871.2\%\\
\addlinespace[0.3em]
\multicolumn{7}{l}{\textbf{$c_p=0.5, \rho=100$}}\\
\hspace{1em}SSCM & 0.49740 (0.00038) & 0.49769 & -0.00029 & 110.04 & - & -\\
\hspace{1em}Latent SCM & 0.50243 (0.00042) & 0.50231 & 0.00012 & 99.98 & 1.002 (0.038) & -9.1\%\\
\hspace{1em}Tyler's M & 0.50401 (0.00038) & 0.50431 & -0.00030 & 6532.00 & 1.312 (0.011) & 5836.0\%\\
\addlinespace[0.3em]
\multicolumn{7}{l}{\textbf{$c_p=0.5, \rho=10000$}}\\
\hspace{1em}SSCM & 0.49719 (0.00040) & 0.49750 & -0.00031 & 112.45 & - & -\\
\hspace{1em}Latent SCM & 0.50247 (0.00046) & 0.50250 & -0.00002 & 101.83 & 1.051 (0.036) & -9.4\%\\
\hspace{1em}Tyler's M & 0.50410 (0.00040) & 0.50450 & -0.00040 & 6740.00 & 1.370 (0.012) & 5893.6\%\\
\addlinespace[0.3em]
\multicolumn{7}{l}{\textbf{$c_p=0.9, \rho=8$}}\\
\hspace{1em}SSCM & 0.89801 (0.00033) & 0.89796 & 0.00005 & 63.02 & - & -\\
\hspace{1em}Latent SCM & 0.90253 (0.00045) & 0.90222 & 0.00031 & 56.83 & 0.500 (0.032) & -9.8\%\\
\hspace{1em}Tyler's M & 0.92039 (0.00035) & 0.92023 & 0.00015 & 22801.89 & 2.431 (0.012) & 36082.4\%\\
\addlinespace[0.3em]
\multicolumn{7}{l}{\textbf{$c_p=0.9, \rho=100$}}\\
\hspace{1em}SSCM & 0.89604 (0.00073) & 0.89593 & 0.00011 & 64.32 & - & -\\
\hspace{1em}Latent SCM & 0.90372 (0.00085) & 0.90425 & -0.00054 & 57.88 & 0.850 (0.050) & -10.0\%\\
\hspace{1em}Tyler's M & 0.92271 (0.00074) & 0.92227 & 0.00044 & 22955.86 & 2.891 (0.019) & 35590.4\%\\
\addlinespace[0.3em]
\multicolumn{7}{l}{\textbf{$c_p=0.9, \rho=10000$}}\\
\hspace{1em}SSCM & 0.89618 (0.00066) & 0.89559 & 0.00059 & 62.94 & - & -\\
\hspace{1em}Latent SCM & 0.90552 (0.00082) & 0.90459 & 0.00094 & 56.87 & 1.031 (0.046) & -9.6\%\\
\hspace{1em}Tyler's M & 0.92338 (0.00067) & 0.92260 & 0.00078 & 22019.33 & 2.945 (0.023) & 34885.3\%\\*
\end{longtable}
\endgroup{}

\end{document}